\newcounter{mparcnt}
\newtheorem{theorem}{Theorem}[section]
\newtheorem{proposition}[theorem]{Proposition}
\newtheorem{definition}[theorem]{Definition}
\newtheorem{corollary}[theorem]{Corollary}
\newtheorem{remark}[theorem]{Remark}
\def\Om{\Omega}
\def\p{\partial}
\def\ep{\epsilon}
\def\de{\delta}
\def\S{{\Sigma}}
\def\<{\langle}
\def\>{\rangle}
\def\na{\nabla}
\providecommand{\abs}[1]{\lvert#1\rvert}
\providecommand{\Abs}[1]{\left\lvert#1\right\rvert}
\providecommand{\norm}[1]{\lVert#1\rVert}
\newcommand{\mbB}{\mathbb{B}}
\newcommand{\mbM}{\mathbb{M}}
\newcommand{\mbR}{\mathbb{R}}
\newcommand{\mbS}{\mathbb{S}}
\newcommand{\mcA}{\mathcal{A}}
\newcommand{\mcF}{\mathcal{F}}
\newcommand{\mcH}{\mathcal{H}}
\newcommand{\mcL}{\mathcal{L}}
\newcommand{\mcW}{\mathcal{W}}
\newcommand{\mfR}{\mathbf{R}}
\newcommand{\rd}{{\rm d}}
\newcommand{\ra}{\rightarrow}
\newcommand{\eq}[1]{\begin{equation}\begin{alignedat}{2} #1 \end{alignedat}\end{equation}}
\numberwithin{equation} {section}
\begin{document}
	
	\title[Monotonicity Formulas]{Monotonicity Formulas  for Capillary Surfaces}
\author[Wang]{Guofang Wang}
\address[G.W]{Mathematisches Institut\\
Universit\"at Freiburg\\
Ernst-Zermelo-Str.1\\
79104\\
\newline\indent Freiburg\\ Germany}
\email{guofang.wang@math.uni-freiburg.de}

\author[Xia]{Chao Xia}
\address[C.X]{School of Mathematical Sciences\\
Xiamen University\\
361005, Xiamen, P.R. China}
\email{chaoxia@xmu.edu.cn}

\author[Zhang]{Xuwen Zhang}
\address[X.Z]{Mathematisches Institut\\
	Universit\"at Freiburg\\
	Ernst-Zermelo-Str.1\\
	79104\\
	\newline\indent Freiburg\\ Germany}
\email{xuwen.zhang@math.uni-freiburg.de}
	
\begin{abstract}
In this paper, we establish monotonicity formulas for capillary surfaces in the half-space $\mbR^3_+$ and in the unit ball $\mbB^3$ and extend the result of Volkmann \cite{Volkmann16} for surfaces with free boundary.
As applications, we obtain Li-Yau-type inequalities  for the Willmore energy of capillary surfaces, and extend Fraser-Schoen's optimal area estimate for minimal free boundary surfaces in $\mbB^3$ \cite{FS11}
to the capillary setting,
which is different to another optimal area estimate proved by Brendle in \cite{Brendle23}.

\noindent{\bf Keywords:} monotonicity formula, Li-Yau inequality, Willmore energy, optimal area estimate, minimal surface, capillary surface \\

\noindent {\bf MSC 2020:} 53C42, 53A10, 49Q15 \\

\end{abstract}

\maketitle
\tableofcontents

\medskip

\section{Introduction}
For a $2$-dimensional immersed, open surface $\S\subset\mbR^{n+1}$, it is proved by Simon \cite{Simon93} that for $0<\sigma<\rho<\infty$, $a\in\mathbb{R}^{n+1}$,
\eq{\label{iden-Simon}
g_{a}(\rho)-g_a(\sigma)
=\frac{1}{\pi}\int_{\S\cap B_\rho(a)\setminus B_\sigma(a)}\Abs{\frac14\vec {\bf H}+\frac{(x-a)^\perp}{\abs{x-a}^2}}^2\rd\mcH^2,
}
where
\eq{
g_a(r)\coloneqq&
\frac{\mcH^2(\S\cap B_r(a))}{\pi r^2}+\frac{1}{16\pi}\int_{\S\cap B_r(a)}\abs{\vec {\bf H}}^2\rd\mcH^2
+\frac{1}{2\pi r^2}\int_{\S\cap B_r(a)}\vec {\bf H}\cdot(x-a)\rd\mcH^2.
}
This is known as Simon's monotonicity identity and is later generalized to hold for integral $2$-varifolds by Kuwert-Sch\"atzle \cite{KS04}.
As an interesting application, the monotonicity formula yields an alternative proof (by taking $\rho\ra\infty$ and $\sigma\ra0^+$) of the Li-Yau inequality (\cite{LY82}):
\eq{
\pi\Theta_{max}
\leq\frac{1}{16}\int_{\mbR^{n+1}}\abs{\vec{\bf H}}^2\rd\mu,
}
where $\Theta_{max}$ denotes the maximal density of an integral $2$-varifold $\mu$, whose generalized mean curvature in $\mbR^{n+1}$ is square integrable.
Note that for an immersion of a $2$-dimensional compact orientable closed smooth surface $F:\S\ra\mbR^{n+1}$,
let $\mu_g$ be the induced area measure of $\S$ with respect to the pull-back metric $g=F^\ast g_{\rm euc}$, then its image as integral $2$-varifold is given by
\eq{
\mu\coloneqq F(\mu_g)
=\left(x\mapsto\mcH^0(F^{-1}(x))\right)\mcH^2\llcorner F(\S).
}Hence
from the Li-Yau inequality, we easily see that the Willmore energy
\eq{
\mcW(\mu)
\coloneqq\frac14\int_{\mbR^{n+1}}\abs{\vec{\bf H}}^2\rd\mu
\geq4\pi,
}
and $F:\S\ra\mbR^{n+1}$ is an embedding
if
\eq{
\mcW(\mu)<8\pi.
}
We refer the interested readers to the monographs \cites{KS12,MN14} for an overview of the study of Willmore energy.

Recently, Volkmann \cite{Volkmann16} generalized this theory to free boundary surfaces in the unit ball $\mbB^{n+1}=\{x\in\mbR^{n+1}:\abs{x}<1\}$ (indeed, he proved it in the context of integral free boundary $2$-varifolds).
He obtained a monotonicity identity similar to \eqref{iden-Simon} (see Section \ref{Sec-4}) and consequently a Li-Yau-type inequality.
As an application, he established the Willmore energy for integral free boundary $2$-varifolds in $\mbB^{n+1}$, and proved that its lower bound is given exactly by $2\pi$.

\subsection{Main Result}
In this paper, we deal with the capillary counterpart of the above results.
We first prove a Simon-type monotonicity formula for capillary hypersurfaces in the half-space $$\mbR^3_+:=\{x\in\mbR^{3}:x_3<1\}$$ that are formulated in the weak sense, see Section \ref{Sec-2-1} for the precise definition.

\begin{theorem}[Simon-type monotonicity formula]\label{Thm-Simon-halfspace}
Given $\theta\in(0,\pi)$, let $V$ be a
rectifiable $2$-varifold supported on $\overline{\mbR^3_+}$ with its weight measure denoted by $\mu$ and let $W$  a rectifiable 2-varifold supported on $\p\mbR^3_+$ with its weight measure denoted by $\eta$, satisfying {a contact angle condition } as in Definition \ref{Defn-contact-pair} (and adopt the notations in Section \ref{Sec-2-1} below).
For any $a\in\mbR^3$, consider the functions $g_a(r),\hat g_a(r)$ defined by
\eq{
g_a(r)\coloneqq
\frac{\mu(B_r(a))}{\pi r^2}-\cos\theta\frac{\eta(B_r(a))}{\pi r^2}+\frac{1}{16\pi}\int_{B_r(a)}\abs{\vec {\bf H}}^2\rd\mu
+\frac{1}{2\pi r^2}\int_{B_r(a)}\vec{\bf H}\cdot(x-a)\rd\mu
,\\
\hat g_a(r)\coloneqq
\frac{\mu(\hat B_r(a))}{\pi r^2}-\cos\theta\frac{\eta(\hat B_r(a))}{\pi r^2}+\frac{1}{16\pi}\int_{\hat B_r(a)}\abs{\vec {\bf H}}^2\rd\mu
+\frac{1}{2\pi r^2}\int_{\hat B_r(a)}\vec {\bf H}\cdot(x-\tilde a)\rd\mu.
}
Then for any $0<\sigma<\rho<\infty$, we have
\eq{\label{identity-Simon-capillary}
&\left(g_a(\rho)+\hat g_a(\rho)\right)-\left(g_a(\sigma)+\hat g_a(\sigma)\right)\\
&=\frac{1}{\pi}\int_{B_\rho(a)\setminus B_\sigma(a)}\Abs{\frac14\vec {\bf H}+\frac{(x-a)^\perp}{\abs{x-a}^2}}^2\rd\mu
+\frac{1}{\pi}\int_{\hat B_\rho(a)\setminus \hat B_\sigma(a)}\Abs{\frac14\vec {\bf H}+\frac{(x-\tilde a)^\perp}{\abs{x-\tilde a}^2}}^2\rd\mu\\
&-2\cos\theta\int_{B_\rho(a)\setminus B_\sigma(a)}\frac{a_3^2}{\abs{x-a}^4}\rd\eta-2\cos\theta\int_{\hat B_\rho(a)\setminus \hat B_\sigma(a)}\frac{\tilde a_3^2}{\abs{x-\tilde a}^4}\rd\eta,
}
where $(x-a)^\perp\coloneqq
\nu_\S\cdot(x-a)\nu_\S$.
\end{theorem}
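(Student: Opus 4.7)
The plan parallels Simon's original monotonicity derivation, with a reflection device that accommodates the contact-angle boundary condition. For $a\in\mbR^3$, let $\tilde a$ denote its reflection across $\p\mbR^3_+$, and consider the two candidate test fields
\begin{equation*}
X_a(x)=(x-a)\phi(\abs{x-a}), \qquad X_{\tilde a}(x)=(x-\tilde a)\phi(\abs{x-\tilde a}),
\end{equation*}
with $\phi$ a smooth approximation of the Simon cutoff $\phi_{\sigma,\rho}(s)=\max(s,\sigma)^{-2}-\rho^{-2}$, extended by $0$ for $s\geq\rho$. The crucial observation is that on $\p\mbR^3_+$ one has $\abs{x-a}=\abs{x-\tilde a}$ while the normal components of $(x-a)$ and $(x-\tilde a)$ are opposite, so the sum $X_a+X_{\tilde a}$ is tangent to $\p\mbR^3_+$ there. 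This tangency is precisely the admissibility required by the first-variation identity of the contact-angle pair $(V,W)$ in Definition \ref{Defn-contact-pair}, which, once unwound, should schematically read
\begin{equation*}
\int\div_S X\,\rd V(x,S)-\cos\theta\int\div_T X\,\rd W(x,T)=-\int X\cdot\vec{\bf H}\rd\mu
\end{equation*}
for such tangential test fields $X$.

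First I would run Simon's bulk calculation separately for $X_a$ and $X_{\tilde a}$. Using the tangential divergence identity
\begin{equation*}
\div_S X_a=2\phi(\abs{x-a})+\phi'(\abs{x-a})\frac{\abs{(x-a)^T}^2}{\abs{x-a}},
\end{equation*}
substituting $\abs{(x-a)^T}^2=\abs{x-a}^2-\abs{(x-a)^\perp}^2$, and combining with the $-X_a\cdot\vec{\bf H}$ term, the standard completion-of-squares produces the Li--Yau integrand $\bigabs{\tfrac14\vec{\bf H}+(x-a)^\perp/\abs{x-a}^2}^2$; integration against $\phi_{\sigma,\rho}$ supplies, in the limit of smooth approximations, the bulk part of $g_a(\rho)-g_a(\sigma)$. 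The analogous computation with $X_{\tilde a}$ yields the $\hat g_a(\rho)-\hat g_a(\sigma)$ contribution.

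Next I would treat the boundary contribution $-\cos\theta\int\div_T(X_a+X_{\tilde a})\,\rd\eta$ on $\p\mbR^3_+$, where $T$ is the horizontal $2$-plane. Since $\sum_{i=1}^2(x_i-a_i)^2=\abs{x-a}^2-a_3^2$ on $\{x_3=0\}$, a direct calculation gives
\begin{equation*}
\div_T X_a=2\phi(\abs{x-a})+\phi'(\abs{x-a})\frac{\abs{x-a}^2-a_3^2}{\abs{x-a}},
\end{equation*}
with the analogous formula for $X_{\tilde a}$ (recall $\tilde a_3^2=a_3^2$ and $\abs{x-\tilde a}=\abs{x-a}$ on the boundary). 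The $2\phi(s)+\phi'(s)s$ piece integrates, under the Simon cutoff, precisely to the terms $-\cos\theta\,\eta(B_r(a))/\pi r^2$ sitting inside $g_a$ and $\hat g_a$. The residual $-\phi'(\abs{x-a})\,a_3^2/\abs{x-a}$, using $\phi'(s)/s=-2/s^4$ on $(\sigma,\rho)$, contributes exactly $-2\cos\theta\int_{B_\rho(a)\setminus B_\sigma(a)} a_3^2/\abs{x-a}^4\,\rd\eta$ on the right-hand side of \eqref{identity-Simon-capillary}, with the symmetric $\tilde a$-term emerging from $X_{\tilde a}$.

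The main obstacle is the rigorous implementation in the rectifiable-varifold setting with the non-smooth Simon cutoff. One must approximate $\phi_{\sigma,\rho}$ by mollified, compactly supported $\phi_\epsilon$ for which $X_a+X_{\tilde a}$ remains tangent to $\p\mbR^3_+$, feed them into Definition \ref{Defn-contact-pair}, and then pass to the limit $\epsilon\to 0$ using the square-integrability of $\vec{\bf H}$ together with dominated convergence; the non-smoothness of $\phi_{\sigma,\rho}$ at $s=\sigma,\rho$ produces no Dirac contributions since both jumps of $\phi'$ are finite. Overall the strategy follows Simon \cite{Simon93} and its extensions by Kuwert--Sch\"atzle \cite{KS04} and Volkmann \cite{Volkmann16}, with the reflection device encoding the capillary angle.
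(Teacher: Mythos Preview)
Your proposal is correct and follows essentially the same route as the paper: the same reflected test fields $X_a+X_{\tilde a}$ with the Simon cutoff $\phi_{\sigma,\rho}$, the same tangency check on $\p\mbR^3_+$, and the same divergence computations on both $\S$ and the horizontal plane. The one minor difference is that you propose a smoothing of $\phi_{\sigma,\rho}$ followed by a limit, whereas the paper simply observes that the first variation identity \eqref{eq-1st-variation-H-3} holds for Lipschitz $X$ and works directly with the Lipschitz cutoff.
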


As an application, a Li-Yau-type inequality together with the characterization of the equality case is obtained for capillary immersions in the half-space.
For the precise definition of capillary immersions, see Section \ref{Sec-3-capillary-immersion}. We will adopt the notations that are used in Section \ref{Sec-3-capillary-immersion}.

The Willmore energy of capillary immersion $F$ is defined in the usual sense, that is,
\begin{definition}\label{Defn-Willmore-halfspace}
\normalfont
Given $\theta\in(0,\pi)$ and a $\theta$-capillary immersion $F:\S\ra\overline{\mbR^3_+}$.
The Willmore energy of the capillary immersion $F$ is defined as
\eq{
\mcW(F)
\coloneqq\frac14\int\abs{\vec {\bf H}}^2\rd\mu.
}
\end{definition}

 {$\mcW(F)$ is a conformal invariant with respect to conformal diffeomorphisms.} Detailed discussions regarding the definition of the Willmore functional for capillary surfaces are provided in Section \ref{Sec-3-1}.

\begin{theorem}\label{Thm-Li-Yau-Ineq0}
Given $\theta\in(0,\pi)$, let $F:\S\ra\overline{\mbR^3}_+$ be a compact $\theta$-capillary immersion, satisfying \eqref{condi-winding-number}.
Then  we have 
\begin{enumerate}
    \item 
the Li-Yau-type inequality
\eq{\label{eq_ly}
\mcW(F)
\geq4(1-\cos\theta) \pi\Theta^2(\mu,x_0)
}
holds for every $x_0\in F(\p\S)$, where $\Theta^2(\mu,x)$ denotes the density of $\mu$ at $x_0$.
\item 
the sharp estimate on Willmore energy:    
\eq{\label{eq_ly2}
    \mcW(F)\geq 2  (1-\cos\theta) \pi,
}
    and if \eq{
    \mcW(F)<4 (1-\cos\theta) \pi,
}
then $F:\p\S\subset\p\mbR^3_+$ must be an embedding.
\item 
For $\theta\in[\frac\pi2,\pi)$, if 
\eq{
W(F)<4\pi,
}
then $F:\S\ra\overline{\mbR^3}_+$ is an embedding.

\end{enumerate}Moreover, 
equality in \eqref{eq_ly2} holds if and only if $F(\S)$ is a $\theta$-spherical cap.
\end{theorem}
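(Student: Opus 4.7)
The plan is to derive \eqref{eq_ly} by applying Theorem \ref{Thm-Simon-halfspace} at a boundary point $a=x_0\in F(\p\S)\subset\p\mbR^3_+$. Since $a_3=0$, we have $\tilde a=a$, so $\hat B_r(a)=B_r(a)$, $\hat g_a\equiv g_a$, and both of the $\cos\theta\int a_3^2/\abs{x-a}^4\,\rd\eta$ error integrals vanish. The identity \eqref{identity-Simon-capillary} therefore collapses to
\[
2\,g_a(\rho)-2\,g_a(\sigma)=\frac{2}{\pi}\int_{B_\rho(a)\setminus B_\sigma(a)}\Abs{\tfrac14\vec{\bf H}+\tfrac{(x-a)^\perp}{\abs{x-a}^2}}^2\rd\mu\;\geq\; 0,
\]
which is the one-sided monotonicity that drives the whole argument.

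Next I would compute the two limits. As $\rho\to\infty$, compactness of $F(\S)$ makes the ratios $\mu(B_\rho)/(\pi\rho^2)$ and $\eta(B_\rho)/(\pi\rho^2)$ vanish, and the $\vec{\bf H}\cdot(x-a)$ term of $g_a(\rho)$ vanishes too since $\abs{x-a}$ is bounded on $F(\S)$; hence $g_a(\rho)\to\mcW(F)/(4\pi)$. As $\sigma\to 0^+$, the $\abs{\vec{\bf H}}^2$ integral vanishes by absolute continuity, and the Cauchy--Schwarz estimate
\[
\Abs{\int_{B_\sigma(a)}\vec{\bf H}\cdot(x-a)\,\rd\mu}\leq \sigma\Bigl(\int_{B_\sigma(a)}\abs{\vec{\bf H}}^2\rd\mu\Bigr)^{1/2}\mu(B_\sigma(a))^{1/2}
\]
drives the mean-curvature cross term to zero. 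The remaining area ratios converge to $\Theta^2(\mu,x_0)$ and $\Theta^2(\eta,x_0)$, and the key technical input is the density identity $\Theta^2(\eta,x_0)=\Theta^2(\mu,x_0)$: under the contact angle condition the tangent varifold of $\mu$ at $x_0$ is a union of half-planes meeting $\p\mbR^3_+$ at angle $\theta$, each paired with the wetted half-plane it traces on $\p\mbR^3_+$, so the multiplicities coincide. Consequently $g_a(\sigma)\to(1-\cos\theta)\Theta^2(\mu,x_0)$, and \eqref{eq_ly} follows from the sign of the right-hand side.

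For part (2), every point of $F(\p\S)$ has density at least $1/2$, so \eqref{eq_ly} yields the sharp lower bound $\mcW(F)\geq 2\pi(1-\cos\theta)$; and if $F|_{\p\S}$ fails to be injective then some boundary point has density at least $1$, contradicting the hypothesis $\mcW(F)<4\pi(1-\cos\theta)$. Part (3) combines our boundary estimate with the classical interior Li--Yau inequality $\mcW(F)\geq 4\pi\Theta^2(\mu,x)$: since $1-\cos\theta\geq 1$ for $\theta\in[\pi/2,\pi)$, the strict bound $\mcW(F)<4\pi$ forces $\Theta^2(\mu,x)<1$ at every point of $F(\S)$, whence $F$ is a global embedding.

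For the rigidity, equality in \eqref{eq_ly2} forces $\Theta^2(\mu,x_0)=1/2$ at every boundary point together with the vanishing of the right-hand side of \eqref{identity-Simon-capillary} in the limit, and hence the extremal equation $\tfrac14\vec{\bf H}+(x-a)^\perp/\abs{x-a}^2\equiv 0$ on $\S$ for every such $a$; this is the standard Li--Yau equality condition which, via inversion centered at $a$ (mapping $F(\S)$ to a flat disk), identifies $F(\S)$ as part of a round sphere through $a$, and imposing the $\theta$-contact angle condition forces it to be a $\theta$-spherical cap. The principal obstacle throughout is making the tangent-varifold/density analysis at a capillary boundary point rigorous, which is precisely where condition \eqref{condi-winding-number} enters to guarantee that the tangent cone of the weak pair $(V,W)$ has the expected half-plane structure.
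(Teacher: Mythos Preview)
Your argument for parts (1) and (2) is correct and matches the paper's approach closely: at a boundary point $a\in\p\mbR^3_+$ the reflection collapses, the $a_3^2$ error terms vanish, and the density relation $\Theta^2(\eta,x_0)=\Theta^2(\mu,x_0)=\tfrac12 N(x_0)$ (which the paper states directly from the immersion structure rather than via tangent cones) yields \eqref{eq_ly} and its consequences.

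Part (3), however, has a genuine gap. You invoke ``the classical interior Li--Yau inequality $\mcW(F)\geq 4\pi\Theta^2(\mu,x)$'', but this is not available for a surface with boundary: the classical Simon monotonicity requires the first variation identity $\int{\rm div}_\S X\,\rd\mu=-\int\vec{\bf H}\cdot X\,\rd\mu$ for \emph{all} compactly supported $X$, whereas here it holds only for $X$ tangent to $\p\mbR^3_+$, so there is an uncontrolled boundary contribution. The correct route is to apply Theorem~\ref{Thm-Simon-halfspace} again, now with $a=y_0$ interior (so $a_3>0$), and this is exactly where the restriction $\theta\in[\frac\pi2,\pi)$ enters: the residual terms $-2\cos\theta\int a_3^2/\abs{x-a}^4\,\rd\eta$ on the right of \eqref{identity-Simon-capillary} no longer vanish, but they have the correct sign precisely when $\cos\theta\leq0$. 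Taking limits then gives only $\mcW(F)\geq 2\pi\Theta^2(\mu,y_0)$ (not $4\pi$), since $\hat B_\sigma(y_0)$ eventually misses $\overline{\mbR^3_+}$; but this still suffices, because $\mcW(F)<4\pi$ forces $\Theta^2(\mu,y_0)<2$, hence $=1$. Your stated conclusion ``$\Theta^2(\mu,x)<1$'' is also mis-phrased --- that would say $x\notin F(\S)$.

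For the rigidity, your inversion sketch is a plausible heuristic but not the paper's argument and would need care: after inverting at $a\in F(\p\S)$ the boundary runs off to infinity, and identifying the resulting minimal surface as planar is not immediate. The paper instead fixes an interior point $y$ with $\vec{\bf H}(y)\neq0$, reads off from the extremal equation that every $x_0\in F(\p\S)$ lies on a fixed sphere (so $F(\p\S)$ is a round circle), then glues on the complementary $(\pi-\theta)$-cap in $\overline{\mbR^3_-}$ to obtain a closed integral $2$-varifold in $\mbR^3$ with Willmore energy exactly $4\pi$, and finally appeals to the closed-surface rigidity to conclude it is a round sphere.
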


Inequality \eqref{eq_ly2} was proved using a flow method by the first and the second author in \cite{WWX23}*{Corollary 1.3} for convex capillary surfaces in $\overline{\mbR^3_+}$, while the contact angle $\theta$ is restricted within the range $(0,\frac\pi2]$.
Therefore Theorem 
\ref{Thm-Li-Yau-Ineq0} complements the full range of contact angle and moreover removes the assumption of the convexity.

Due to the conformal invariance of the Willmore functional, Theorem \ref{Thm-Li-Yau-Ineq0}
implies the same inequality as \eqref{eq_ly2} for capillary surfaces in the unit ball, which interestingly implies a new optimal area estimate for minimal capillary surfaces in $\mbB^3$.
\begin{theorem}
\label{coro3.5}
Given $\theta\in(0,\pi)$, let $F:\S\ra\overline{\mbB^3}$ be a $\theta$-capillary minimal immersion, satisfying \eqref{condi-winding-number}.
Then there holds
\eq{\label{eq_coro3.5}
2\abs{\S}-\cos\theta\abs{T}
\geq2(1-\cos\theta)\pi.
}
Equality holds if and only if $F(\S)$ is  a totally geodesic disk in $\overline{\mbB^3}$. Here $|T|$ is the area of the ``wetting'' part defined in Section \ref{Sec-3-capillary-immersion}.
\end{theorem}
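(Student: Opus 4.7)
The plan is to reduce \eqref{eq_coro3.5} to the half-space Willmore estimate \eqref{eq_ly2} by means of a M\"obius transformation, and then to identify the transformed Willmore energy with the geometric quantity $2|\S|-\cos\theta|T|$ under the minimality hypothesis.

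First I pick a point $p\in\partial\mbB^3\setminus F(\S)$; this is possible because $F(\partial\S)$ is a finite union of smooth curves on $\partial\mbB^3$ and therefore has two-dimensional measure zero. Let $\Phi$ denote the inversion of $\mbR^3$ centered at $p$ with radius $\sqrt 2$. Then $\Phi$ is a conformal diffeomorphism that carries $\overline{\mbB^3}\setminus\{p\}$ onto a closed half-space which, after a rigid motion, I identify with $\overline{\mbR^3_+}$; moreover $\Phi$ sends $\partial\mbB^3\setminus\{p\}$ onto $\partial\mbR^3_+$. Since conformal maps preserve angles, the composition $\tilde F\coloneqq\Phi\circ F\colon\S\to\overline{\mbR^3_+}$ is a $\theta$-capillary immersion for which \eqref{condi-winding-number} persists. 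Applying Theorem \ref{Thm-Li-Yau-Ineq0}(2) to $\tilde F$ then yields
\[
\mcW(\tilde F)\;\geq\;2(1-\cos\theta)\pi,
\]
with equality if and only if $\tilde F(\S)$ is a $\theta$-spherical cap in $\overline{\mbR^3_+}$.

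The decisive step is to express $\mcW(\tilde F)$ in terms of the data of $F$. Using the conformal invariance of the Willmore functional for capillary surfaces discussed in Section \ref{Sec-3-1}, which rests on the pointwise invariance of $\bigl(\tfrac14|\vec{\bf H}|^2-K\bigr)d\mu$ in flat ambient combined with Gauss--Bonnet applied to both $\S$ and to the wetted region $T$, one verifies the identity
\[
\mcW(\tilde F)\;=\;\tfrac14\int_{\S}|\vec{\bf H}|^2\,d\mu\;+\;2|\S|\;-\;\cos\theta|T|.
\]
The interior summand $+2|\S|$ comes from integrating the tangential divergence identity ${\rm div}_\S x^T=2+\vec{\bf H}\cdot x$ on $\S$, while the boundary summand $-\cos\theta|T|$ emerges from decomposing the outer conormal $\nu_{\partial\S}$ into components along the outward normal to $\mbB^3$ and the inward conormal of $T$, together with the divergence theorem on the wetted region $T\subset\partial\mbB^3$. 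Under the minimality hypothesis $\vec{\bf H}\equiv 0$ the first integral vanishes and \eqref{eq_coro3.5} follows at once.

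For the equality case, $\tilde F(\S)$ must be a $\theta$-spherical cap in $\overline{\mbR^3_+}$; since $\Phi^{-1}$ is itself a M\"obius transformation, the preimage $F(\S)$ is a round portion of a sphere or a plane meeting $\partial\mbB^3$ at angle $\theta$. The minimality hypothesis $\vec{\bf H}\equiv 0$ rules out the spherical alternative and leaves only a totally geodesic disk. I expect the main technical obstacle to lie in the identification step: the boundary contributions produced by the conformal change of $\tfrac14|\vec{\bf H}|^2\,d\mu$ must be matched precisely with the wetted-region term $\cos\theta|T|$ through the contact-angle condition and the Gauss--Bonnet calculation on $T\subset\partial\mbB^3$, which is exactly the content of the conformal invariance claim following Definition \ref{Defn-Willmore-halfspace}.
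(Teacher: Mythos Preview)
Your approach—transport to the half-space via a M\"obius map, apply Theorem~\ref{Thm-Li-Yau-Ineq0}(2), and identify the Willmore energy with the geometric data of $F$ via conformal invariance—is precisely the paper's first proof (through Corollary~\ref{coro3.4}); the paper also gives a second, independent proof directly from the ball monotonicity identity \eqref{iden-Simon-ball-1}.

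One point to clean up: your displayed identity
\[
\mcW(\tilde F)=\tfrac14\int_{\S}|\vec{\bf H}|^2\,\rd\mu+2|\S|-\cos\theta|T|
\]
is not correct as a general statement. Conformal invariance gives $\mcW(\tilde F)=\mcW(F)$, and the ball Willmore energy is computed in \eqref{defn-Willmore-ball} as $\mcW(F)=\tfrac14\int_\S|\vec{\bf H}|^2\,\rd\mu+\sin\theta|\partial\S|-\cos\theta|T|$. Replacing $\sin\theta|\partial\S|$ by $2|\S|$ is a \emph{separate} step that itself uses minimality: testing the first variation with the position field $x$ (equivalently, integrating ${\rm div}_\S x=2$) yields $2|\S|=\sin\theta|\partial\S|-\int_\S\vec{\bf H}\cdot x\,\rd\mu$, so the two quantities agree only when the $\vec{\bf H}$-term vanishes. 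Your own justification via ${\rm div}_\S x^T=2+\vec{\bf H}\cdot x$ carries exactly this extra term. Since the theorem concerns minimal surfaces this does not break the argument, but you should state the conformal identity with $\sin\theta|\partial\S|$ and invoke minimality afterwards, as the paper does in \eqref{eq-S-pS}, rather than folding the divergence step into the conformal-invariance claim. Your treatment of the equality case is fine.
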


As a  special case of the isoperimetric inequality for minimal submanifolds proved by Brendle \cite{Brendle21}, any capillary minimal surface $\S$ in the unit ball satisfies the following optimal estimate
\eq{\label{ineq-Brendle23-Thm5.5}
\abs{\S}
\geq\sin^2\theta\pi
=\sin^2\theta\abs{\mbB^2}
}
with equality holding if and only if $\S$ is a flat disk.
In fact, the higher dimensional counterparts also hold, see \cite{Brendle23}*{Theorem 5.5}.
Restricting $\theta=\frac\pi2$, i.e., in the free boundary case, this area lower bound was obtained  in \cite{FS11} and \cite{Brendle12}.
\eqref{eq_coro3.5} now provides a new optimal area estimate for minimal capillary surfaces in the unit ball. It seems that both area estimates do not imply each other. 
In view of \cite{Brendle23}, it would be interesting to seek for a higher dimensional generalization of \eqref{eq_coro3.5}.

In the last part of the paper, Section \ref{Sec-4}, we establish a monotonicity formula \eqref{iden-Simon-capil-x0} for capillary
surfaces in $\mbB^3$. 
This monotonicity formula is  a generalization of Volkmann's theorem  for surfaces with  free boundary in the unit ball \cite{Volkmann16}*{Theorem 3.1}.
Due to the extra 2-varifold $W$ supported on the unit sphere (as in Theorem \ref{Thm-Simon-halfspace}), our monotonicity formula \eqref{iden-Simon-capil-x0} becomes more involved  than his.
We refer the readers to Theorem \ref{Thm-in-ball} below.
Theorem \ref{coro3.5} can also be proved directly by 
this monotonicity formula.


We end the introduction with a short summary of  the fruitful results of the  study of 
capillary surfaces. The existence, regularity, and geometric properties of  capillary surfaces have attracted more and more attention from 
differential geometers and geometric analysts, see the nice book of Finn \cite{Finn86} for a through introduction.
Here we mention some recent progress on this topic. 
In $\mbR^3$, optimal boundary regularity for solutions to the capillarity problems (volume-constrained local minimizers of the free energy functional) was obtained by Taylor \cite{Taylor77}, while the partial regularity was obtained recently by De Philippis-Maggi \cites{DePM15,DePM17} in $\mbR^n (n\geq4)$, even in the anisotropic setting.
Quite recently, using the Min-Max method, the existence of capillary minimal or CMC hypersurfaces in compact $3$-manifolds with boundary has been shown independently by De Masi-De Philippis \cite{MP21} and Li-Zhou-Zhu \cite{LZZ21}.
In terms of the classical differential geometry, Hong-Saturnino \cite{HS23} carried out curvature and index estimates for compact and non-compact capillary surfaces.
In \cite{JWXZ22}, we proved a Heintze-Karcher-type inequality and give the characterization of smooth CMC capillary hypersurfaces in the half-space or in a wedge, namely, the Alexandrov-type theorem.
The anisotropic counterpart was tackled in \cites{JWXZ23,JWXZ23b} as well.
See also \cite{XZ23} for a non-smooth generalization.

\subsection{Organization of the Paper}
In Section \ref{Sec-2}
we first introduce the capillary surface in the half-space in the setting of varifolds and then
provide the proof of Theorem \ref{Thm-Simon-halfspace}, 
In Section \ref{Sec-3-1} we use the monotonicity formula to estimate the Willmore energy for capillary surfaces in the half-space and prove
the Li-Yau inequality \eqref{eq_ly} and Theorem \ref{Thm-Li-Yau-Ineq0}. The Willmore energy for capillary surfaces in  the unit ball is discussed in Section \ref{sec-3-2}, together with one proof of Theorem \ref{coro3.5}.
Finally, in Section \ref{Sec-4}, we prove the Simon-type monotonicity formulas in the unit ball, and then provide an alternative proof of Theorem \ref{coro3.5}.

\section{Monotonicity Formula in the Half-Space}\label{Sec-2}

\subsection{Set-ups}\label{Sec-2-1}
We will be working on the Euclidean space $\mbR^3$, with  the Euclidean metric  denoted by $g_{\rm euc}$ and the corresponding Levi-Civita connection denoted by $\na$. $\mbR^3_+=\{x:x_3>0\}$ is the open upper half-space and $E_3\coloneqq(0,0,1)$.

Let $\xi:\mbR^3\ra\p\mbR^3_+$ denote the unique point projection onto the hyperplane $\p\mbR^{3}_+$.
It is easy to see that for any $x\in\mbR^3$, $\xi(x)=x-x_3E_3$, and hence $\xi$ is a smooth map.
Define
\eq{
\tilde x\coloneqq2\xi(x)-x
=x-2x_3E_3
}
to be the reflection of $x$ across $\p\mbR^3_+$.
Given a point $a\in\mbR^3$, define
\eq{
r=\abs{x-a},\quad
\tilde r=\abs{\tilde x-a},
}
it is clear  that $\tilde r=\abs{\tilde x-a}
=\abs{x-\tilde a}$.

We refer to \cite{Sim83} for the background materials in Geometric Measure Theory and consider the following weak formulation of capillary surfaces. 
\begin{definition}\label{Defn-contact-pair}
\normalfont
Given $\theta\in(0,\pi)$, let $V$ be a rectifiable $2$-varifold supported on $\overline{\mbR^3_+}$ with its weight measure denoted by $\mu$ and let $W$  a rectifiable 2-varifold supported on $\p\mbR^3_+$ with its weight measure denoted by $\eta$.
$(V,W)$ is said to satisfy \textit{a contact angle condition }$\theta$ if there exists a $\mu$-measurable vector field $\vec {\bf H}\in\mcL^1(\mbR^3,\mu)$ with $\vec {\bf H}(x)\in T_x\p\mbR^3_+$ for $\mu$-a.e. $x\in\p\mbR^3_+$,
such that for every $X\in C_c^1(\mbR^3;\mbR^3)$ with $X$
tangent to $\p\mbR^3_+$, 
\eq{\label{eq-1st-variation-H-3}
\int_{\mbR^3}{\rm div}_\S X\rd\mu-\cos\theta\int_{\p\mbR^3_+}{\rm div}_{\p\mbR^3_+}X\rd\eta
=-\int_{\mbR^3}\vec {\bf H}\cdot X\rd\mu,
}
where $\S\coloneqq{\rm spt}\,\mu$ is countably $2$-rectifiable, for simplicity we also set $T\coloneqq{\rm spt}\,\eta$.
In particular, if $X$ vanishes along $\p\mbR^3_+$, there holds that
\eq{
\int_{\mbR^3}{\rm div}_\S X\rd\mu=-\int_{\mbR^3}\vec {\bf H}\cdot X\rd\mu.
}
\end{definition}

This definition was introduced in \cite{MP21}, which follows from the one initiated in \cite{KT17},
with the boundary part weakened to be just a $2$-varifold.

Notice that the first variation formula \eqref{eq-1st-variation-H-3} is valid if $X$ is merely a Lipschitz vector field.

In this section, we consider those $V,W$ that are integral $2$-varifolds with  $\mu(\p\mbR^3_+)=0$
and $\vec {\bf H}\in\mcL^2(\mbR^3,\mu)$.
It follows that $\vec{\bf H}\perp\S$ for $\mu$-a.e. $x\in\S$ thanks to the well known Brakke's perpendicular theorem (\cite{Brakke78}*{Sect. 5.8}), so that for any vector $v\in\mbR^3$, one has
\eq{\label{eq-Volkmann16-(8)}
2\Abs{\frac14\vec {\bf H}+v^\perp}^2
=\frac18\abs{\vec {\bf H}}^2+2\abs{v^\perp}^2+\vec {\bf H}\cdot v,
}
where $v^\perp$ denotes the normal part of $v$ with respect to the approximate tangent space $T_x\S$.

\subsection{A Monotonicity Formula}
A general monotonicity formula for the pairs of varifolds satisfying a contact condition has  been established in \cite{MP21} (see also \cite{KT17}) with none sharp constants, which generalizes a previous result in \cite{GJ86} for rectifiable free boundary varifolds.
We are interested in the Simon-type monotonicity formula \eqref{iden-Simon} for the capillary case with optimal constants.
\begin{proof}[Proof of Theorem \ref{Thm-Simon-halfspace}]
We follow closely the reflection idea in \cite{GJ86}. See also \cite{KT17}.
Define $l(s)\coloneqq\left((\frac{1}{s_\sigma})^2-\frac{1}{\rho^2}\right)_+$,
where $s_\sigma\coloneqq\max\{s,\sigma\}$.
$l(s)$ is then a Lipschitz cut-off function and it is easy to see that
\eq{
l(s)
=
\begin{cases}
    \frac{1}{\sigma^2}-\frac{1}{\rho^2},\quad&0<s\leq\sigma,\\
    \frac{1}{s^2}-\frac{1}{\rho^2},\quad&\sigma<s\leq\rho,\\
    0,\quad&\rho<s.
\end{cases}
}
We wish to find a suitable vector field to test \eqref{eq-1st-variation-H-3}. The construction is inspired by \cite{GJ86} and is nowadays quiet standard.
Precisely,
we define
\eq{
&X_1(x)=\left((\frac{1}{\abs{x-a}_\sigma})^2-\frac{1}{\rho^2}\right)_+(x-a),\\
&X_2(x)=\left((\frac{1}{\abs{x-\tilde a}_\sigma})^2-\frac{1}{\rho^2}\right)_+(x-\tilde a),\\
&X(x)\coloneqq X_1(x)+X_2(x).
}
For $x\in\p\mbR^3_+$, since $x=\tilde x$ we simply have
\eq{
\abs{x-a}_\sigma
=\abs{\tilde x-a}_\sigma
=\abs{x-\tilde a}_\sigma,
}
and hence
\eq{
X(x)=\left((\frac{1}{\abs{x-a}_\sigma})^2-\frac{1}{\rho^2}\right)_+(2x-(a+\tilde a))\in\p\mbR^3_+.
}

Let $\hat B_r(a)=\{x:\abs{x-\tilde a}<r\}$
and consider the partitions of $\mbR^3$:
\eq{
\mcF_1\coloneqq\{B_\sigma(a),B_\rho(a)\setminus B_\sigma(a),\mbR^3\setminus B_\rho(a)\},\\
\mcF_2\coloneqq\{\hat B_\sigma(a),\hat B_\rho(a)\setminus \hat B_\sigma(a),\mbR^3\setminus \hat B_\rho(a)\}.
}
We wish to test \eqref{eq-1st-variation-H-3} by $X(x)$. To this end, we compute
\eq{
\int_A{\rm div}_\S X_i\rd\mu-\cos\theta\int_A{\rm div}_{\p\mbR^3_+}X_i\rd\eta,\text{ and }\int_A\vec {\bf H}\cdot X_i\rd\mu
}
for all sets $A\in\mcF_i$, $i=1,2$, separately.

\noindent{\bf For $X_2$:} on $0\leq\abs{x-\tilde a}\leq\sigma$, direct computation shows that
\eq{
\na X_2(x)=(\frac{1}{\sigma^2}-\frac{1}{\rho^2}){\rm Id},
}
and hence
\eq{
\int_{\hat B_\sigma(a)}{\rm div}_\S X_2\rd\mu-\cos\theta\int_{\hat B_\sigma(a)}{\rm div}_{\p\mbR^3_+}X_2\rd\eta
&=(\frac{2}{\sigma^2}-\frac{2}{\rho^2})\left(\mu(\hat B_\sigma(a))-\cos\theta\eta(\hat B_\sigma(a))\right),\\
\int_{\hat B_\sigma(a)}\vec {\bf H}\cdot X_2\rd\mu
&=(\frac{1}{\sigma^2}-\frac{1}{\rho^2})\int_{\hat B_\sigma(a)}\vec {\bf H}\cdot(x-\tilde a)\rd\mu.
}
On $\sigma<\abs{x-\tilde a}\leq\rho$, we have $X_2(x)=\left(\frac{1}{\abs{x-\tilde a}^2}-\frac{1}{\rho^2}\right)(x-\tilde a)$. By  noticing that
\eq{
\na(\frac{1}{\abs{x-\tilde a}^2})
=-2\frac{1}{\abs{x-\tilde a}^4}(x-\tilde a),
}
we have
\eq{
\na X_2(x)
&=\left(\frac{1}{\abs{x-\tilde a}^2}-\frac{1}{\rho^2}\right){\rm Id}-2\frac{1}{\abs{x-\tilde a}^4}(x-\tilde a)\otimes(x-\tilde a),\\
{\rm div}X_2(x)
&=3\left(\frac{1}{\abs{x-\tilde a}^2}-\frac{1}{\rho^2}\right)-\frac{2}{\abs{x-\tilde a}^2}
=\frac{1}{\abs{x-\tilde a}^2}-\frac{3}{\rho^2},\\
{\rm div}_\S X_2(x)
&={\rm div}X_2(x)-\na X_2[\nu_\S]\cdot\nu_\S=-\frac{2}{\rho^2}+2\Abs{\frac{(x-\tilde a)^\perp}{\abs{x-\tilde a}^2}}^2,\\
{\rm div}_{\p\mbR^3_+}X_2(x)&={\rm div}X_2(x)-\na X_2[E_3]\cdot E_3
=-\frac{2}{\rho^2}+\frac{2\tilde a_3^2}{\abs{x-\tilde a}^4}.
}
It follows that
\eq{&
\int_{\hat B_\rho(a)\setminus\hat B_\sigma(a)}{\rm div}_{\S}X_2\rd\mu-\cos\theta\int_{\hat B_\rho(a)\setminus\hat B_\sigma(a)}{\rm div}_{\p\mbR^3_+}X_2\rd\eta \\
=&-\frac{2}{\rho^2}\left(\mu(\hat B_{\rho}(a)\setminus \hat B_\sigma(a))-\cos\theta\eta(\hat B_{\rho}(a)\setminus \hat B_\sigma(a))\right)
+2\int_{\hat B_\rho(a)\setminus\hat B_\sigma(a)}\Abs{\frac{(x-\tilde a)^\perp}{\abs{x-\tilde a}^2}}^2\rd\mu\\
&-2\cos\theta\int_{\hat B_\rho(a)\setminus\hat B_\sigma(a)}\frac{\tilde a_3^2}{\abs{x-\tilde a}^4}\rd\eta,\\&
\int_{\hat B_\rho(a)\setminus\hat B_\sigma(a)}\vec {\bf H}\cdot X_2\rd\mu
=-\frac{1}{\rho^2}\int_{\hat B_\rho(a)\setminus\hat B_\sigma(a)}\vec {\bf H}\cdot(x-\tilde a)\rd\mu
+\int_{\hat B_\rho(a)\setminus\hat B_\sigma(a)}\vec {\bf H}\cdot\frac{x-\tilde a}{\abs{x-\tilde a}^2}\rd\mu.
}
Combining these computations, we obtain
\eq{ &
\int_{\mbR^3}{\rm div}_\S X_2\rd\mu-\cos\theta\int_{\mbR^3}{\rm div}_{\p\mbR^3_+}X_2\rd\eta\\
=&\frac{2}{\sigma^2}\left(\mu(\hat B_\sigma(a))-\cos\theta\eta(\hat B_\sigma(a))\right)-\frac{2}{\rho^2}\left(\mu(\hat B_\rho(a))-\cos\theta\eta(\hat B_\rho(a))\right)\\
&+2\int_{\hat B_\rho(a)\setminus\hat B_\sigma(a)}\Abs{\frac{(x-\tilde a)^\perp}{\abs{x-\tilde a}^2}}^2\rd\mu-2\cos\theta\int_{\hat B_\rho(a)\setminus\hat B_\sigma(a)}\frac{a_3^2}{\abs{x-\tilde a}^4}\rd\eta,
}
and
\eq{
\int_{\mbR^3}\vec {\bf H}\cdot X_2\rd\mu
=&\frac{1}{\sigma^2}\int_{\hat B_\sigma(a)}\vec {\bf H}\cdot(x-\tilde a)\rd\mu
-\frac{1}{\rho^2}\int_{\hat B_\rho(a)}\vec {\bf H}\cdot(x-\tilde a)\rd\mu\\
&+\int_{\hat B_\rho(a)\setminus\hat B_\sigma(a)}\vec {\bf H}\cdot\frac{x-\tilde a}{\abs{x-\tilde a}^2}\rd\mu.
}
Similar computations hold for $X_1$ and we obtain
\eq{&
\int_{\mbR^3}{\rm div}_\S X_1\rd\mu-\cos\theta\int_{\mbR^3}{\rm div}_{\p\mbR^3_+}X_1\rd\eta\\
=&\frac{2}{\sigma^2}\left(\mu(B_\sigma(a))-\cos\theta\eta( B_\sigma(a))\right)-\frac{2}{\rho^2}\left(\mu( B_\rho(a))-\cos\theta\eta( B_\rho(a))\right)\\
&+2\int_{ B_\rho(a)\setminus B_\sigma(a)}\Abs{\frac{(x- a)^\perp}{\abs{x- a}^2}}^2\rd\mu-2\cos\theta\int_{ B_\rho(a)\setminus B_\sigma(a)}\frac{a_3^2}{\abs{x- a}^4}\rd\eta,
}
and
\eq{
\int_{\mbR^3}\vec {\bf H}\cdot X_1\rd\mu
=&\frac{1}{\sigma^2}\int_{B_\sigma(a)}\vec {\bf H}\cdot(x- a)\rd\mu
-\frac{1}{\rho^2}\int_{B_\rho(a)}\vec {\bf H}\cdot(x-a)\rd\mu\\
&+\int_{ B_\rho(a)\setminus B_\sigma(a)}\vec {\bf H}\cdot\frac{x-a}{\abs{x-a}^2}\rd\mu.
}

On the other hand, thanks to \eqref{eq-Volkmann16-(8)}, we have
\eq{
\int_{ B_\rho(a)\setminus B_\sigma(a)}\vec {\bf H}\cdot\frac{x-a}{\abs{x-a}^2}\rd\mu
=2\int_{ B_\rho(a)\setminus B_\sigma(a)}\Abs{\frac14\vec {\bf H}+\frac{(x-a)^\perp}{\abs{x-a}^2}}^2\rd\mu\\
-\frac18\int_{ B_\rho(a)\setminus B_\sigma(a)}\abs{\vec {\bf H}}^2\rd\mu-2\int_{ B_\rho(a)\setminus B_\sigma(a)}\Abs{\frac{(x-a)^\perp}{\abs{x-a}^2}}^2\rd\mu,\\
\int_{ \hat B_\rho(a)\setminus \hat B_\sigma(a)}\vec {\bf H}\cdot\frac{x-\tilde a}{\abs{x-\tilde a}^2}\rd\mu
=2\int_{ \hat B_\rho(a)\setminus \hat B_\sigma(a)}\Abs{\frac14\vec {\bf H}+\frac{(x-\tilde a)^\perp}{\abs{x-\tilde a}^2}}^2\rd\mu\\
-\frac18\int_{ \hat B_\rho(a)\setminus \hat B_\sigma(a)}\abs{\vec {\bf H}}^2\rd\mu-2\int_{ \hat B_\rho(a)\setminus \hat B_\sigma(a)}\Abs{\frac{(x-\tilde a)^\perp}{\abs{x-\tilde a}^2}}^2\rd\mu.
}
Putting these facts into the first variation formula \eqref{eq-1st-variation-H-3} and recalling the definition of $g_a(r),\hat g_a(r)$, we deduce \eqref{identity-Simon-capillary}.
\end{proof}

\begin{remark}
\normalfont We remark that in Theorem \ref{Thm-Simon-halfspace}:
\begin{enumerate} 
\item $B_r (a)\cap \p\mbR^3_+=\hat B_r(a)\cap \p\mbR^3_+$, and hence $\eta (B_r(a))=\eta(\hat B_r(a))$.
\item $a_3^2=\tilde a_3^2$ and hence the corresponding terms involving them are the same.
    \end{enumerate}
\end{remark}

\begin{proposition}\label{Prop-uppersemi-continuous}
Under the assumptions of Theorem \ref{Thm-Simon-halfspace}, the following statements hold:
\begin{enumerate}
\item Given $\theta\in[\frac{\pi}{2},\pi)$. For every $a\in\mbR^3$, the tilde-density
\eq{
\tilde\Theta^2(\mu-\cos\theta\eta,a)
\coloneqq\lim_{r\searrow0}\left(\frac{(\mu-\cos\theta\eta)(B_r(a))}{\pi r^2}+\frac{(\mu-\cos\theta\eta)(\hat B_r(a))}{\pi r^2}\right)
}
exists.
Moreover, the function $x\mapsto\tilde\Theta(\mu-\cos\theta\eta,x)$ is upper semi-continuous in $\mbR^3$;
\item Given $\theta\in(0,\pi)$.
For every $a\in\p\mbR^3$, the tilde-density $\tilde\Theta^2(\mu-\cos\theta\eta,a)$ exists, and the function $x\mapsto\tilde\Theta(\mu-\cos\theta\eta,x)$ is upper semi-continuous in $\p\mbR^3_+$.
\end{enumerate}

\end{proposition}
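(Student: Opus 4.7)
My approach is the standard one in this context: extract monotonicity of a suitable function from \eqref{identity-Simon-capillary} and combine it with the continuity properties of Radon measures at generic radii. Writing $G_a(r):=g_a(r)+\hat g_a(r)$, the identity \eqref{identity-Simon-capillary} expresses $G_a(\rho)-G_a(\sigma)$ as the sum of two manifestly non-negative $\mu$-integrals plus two $\eta$-integrals carrying the prefactor $-2\cos\theta$ and the weights $a_3^2/|x-a|^4$, $\tilde a_3^2/|x-\tilde a|^4$. In case (1), $\cos\theta\le 0$ makes $-2\cos\theta\ge 0$, so every contribution on the right-hand side is non-negative; in case (2), $a\in\p\mbR^3_+$ forces $a_3=\tilde a_3=0$, so the two $\eta$-integrals vanish pointwise. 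Either way, $r\mapsto G_a(r)$ is non-decreasing.

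Once monotonicity is in hand, $\lim_{r\to 0^+}G_a(r)$ exists in $\mbR$ since $G_a$ is bounded above by $G_a(\rho)$ for any fixed $\rho>0$. I would then identify this limit with $\tilde\Theta^2(\mu-\cos\theta\eta,a)$ by showing the curvature corrections in $g_a,\hat g_a$ vanish as $r\to 0^+$: the $|\vec{\bf H}|^2$-integrals go to $0$ because $\vec{\bf H}\in L^2(\mu)$ and $\mu(B_r(a))+\mu(\hat B_r(a))\to 0$, and the $\vec{\bf H}\cdot(x-a)/(\pi r^2)$ terms admit the Cauchy--Schwarz bound
\begin{equation}
\Abs{\tfrac{1}{2\pi r^2}\int_{B_r(a)}\vec{\bf H}\cdot(x-a)\rd\mu}\le \tfrac{1}{2\sqrt{\pi}}\left(\tfrac{\mu(B_r(a))}{\pi r^2}\right)^{1/2}\left(\int_{B_r(a)}|\vec{\bf H}|^2\rd\mu\right)^{1/2},
\end{equation}
whose second factor tends to zero while the first stays bounded for small $r$ thanks to the monotonicity of $G_a$ together with the local finiteness of the $2$-density of $\eta$.

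The upper semi-continuity follows the usual ``good radius'' argument. For all but countably many $r>0$, the spheres $\p B_r(a_0)$ and $\p\hat B_r(a_0)$ carry zero mass with respect to both $\mu$ and $\eta$, so dominated convergence yields continuity of $a\mapsto G_a(r)$ at $a_0$. Given $\epsilon>0$ one picks such a ``good'' radius $r_0$ small enough that $G_{a_0}(r_0)<\tilde\Theta^2(\mu-\cos\theta\eta,a_0)+\epsilon$. For an approximating sequence $a_n\to a_0$---taken in $\mbR^3$ in case (1), in $\p\mbR^3_+$ in case (2), so that monotonicity of $G_{a_n}$ is available from Step~1---one obtains
\begin{equation}
\limsup_{n\to\infty}\tilde\Theta^2(\mu-\cos\theta\eta,a_n)\le \limsup_{n\to\infty}G_{a_n}(r_0)=G_{a_0}(r_0)<\tilde\Theta^2(\mu-\cos\theta\eta,a_0)+\epsilon,
\end{equation}
and letting $\epsilon\to 0$ gives USC.

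The main subtlety lies in this last step: one must ensure monotonicity of $G_{a_n}$ is available along the approximating sequence. In case (1) this is automatic from $\cos\theta\le 0$ regardless of where $a_n$ lies, so USC holds on all of $\mbR^3$. In case (2) it is available only when $(a_n)_3=0$, which is exactly the reason the USC assertion there must be restricted to $\p\mbR^3_+$ rather than to ambient $\mbR^3$.
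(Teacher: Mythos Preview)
Your approach is essentially the paper's: monotonicity of $G_a$ from \eqref{identity-Simon-capillary}, vanishing of the curvature correction terms, then upper semi-continuity. The paper's USC step uses closed balls and the standard upper semi-continuity of Radon measures $a\mapsto(\mu-\cos\theta\eta)(\overline{B_\rho(a)})$, whereas you use a ``good radius'' continuity argument; these are equivalent standard devices.

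Two small points to tighten. First, ``$\lim_{r\to 0^+}G_a(r)$ exists in $\mbR$ since $G_a$ is bounded above'' is not enough on its own---a non-decreasing function bounded above could still have limit $-\infty$ at $0^+$; finiteness follows \emph{after} you show $R(r)\to 0$ and observe that the density ratios are non-negative in case (1). Second, the assertion that $\bigl(\mu(B_r(a))/(\pi r^2)\bigr)^{1/2}$ stays bounded is the key quantitative step and is not quite immediate from monotonicity alone: the paper carries out the absorption explicitly via Young's inequality with $\epsilon\in(\tfrac14,\tfrac12)$ to obtain \eqref{eq-KS04-(A.6)}, and your Cauchy--Schwarz estimate feeds into exactly the same quadratic-inequality absorption. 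In case (1) this does not require any separate control on $\eta$ (since $-\cos\theta\,\eta\ge 0$ already sits on the favorable side), so your appeal to ``local finiteness of the $2$-density of $\eta$'' is unnecessary there.
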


\begin{proof}
Define
\eq{
R(r)
&\coloneqq\frac{1}{2\pi r^2}\int_{ B_r(a)}\vec {\bf H}\cdot(x- a)\rd\mu+\frac{1}{2\pi r^2}\int_{\hat B_r(a)}\vec {\bf H}\cdot(x-\tilde a)\rd\mu,
}
and
\eq{
G_\theta(r)
\coloneqq &\frac{(\mu-\cos\theta\eta)(B_r(a))}{\pi r^2}+\frac{(\mu-\cos\theta\eta)(\hat B_r(a))}{\pi r^2}\\
&+\frac{1}{16\pi}\int_{B_r(a)}\abs{\vec {\bf H}}^2\rd\mu+\frac{1}{16\pi}\int_{\hat B_r(a)}\abs{\vec {\bf H}}^2\rd\mu+R(r),
}
then from \eqref{identity-Simon-capillary} and thanks to $\theta\in[\pi/2,\pi)$, we know that $G_\theta(r)$ is monotonically nondecreasing, so that
$
\lim_{r\ra0^+}G_\theta(r)
$
exists.


For $R(r)$, we estimate with H\"older inequality:
\eq{\label{ineq-KS04-(A.5)}
\abs{R(r)}
\leq &\left(\frac{\mu(B_r(a))}{\pi r^2}\right)^{\frac12}\left(\frac{1}{4\pi}\int_{B_r(a)}\abs{\vec {\bf H}}^2\rd\mu\right)^{\frac12}\\
&+\left(\frac{\mu(\hat B_r(a))}{\pi r^2}\right)^{\frac12}\left(\frac{1}{4\pi}\int_{\hat B_r(a)}\abs{\vec {\bf H}}^2\rd\mu\right)^{\frac12}.
}
Moreover, for $\frac14<\ep<\frac12$, Young's inequality gives
\eq{
\abs{R(r)}
\leq &\ep \frac{\mu(B_r(a))}{\pi r^2}+\frac{1}{16\pi\ep}\int_{B_r(a)}\abs{\vec {\bf H}}^2\rd\mu\\
&+\ep\frac{\mu(\hat B_r(a))}{\pi r^2}+\frac{1}{16\pi\ep}\int_{\hat B_r(a)}\abs{\vec {\bf H}}^2\rd\mu.
}

By virtue of the monotonicity of $G_\theta(r)$, we obtain
\eq{ &
\frac{(\mu-\cos\theta\eta)(B_\sigma(a))}{\pi \sigma^2}+\frac{(\mu-\cos\theta\eta)(\hat B_\sigma(a))}{\pi\sigma^2}\\
\leq &\frac{(\mu-\cos\theta\eta)(B_\rho(a))}{\pi \rho^2}+\frac{(\mu-\cos\theta\eta)(\hat B_\rho(a))}{\pi\rho^2}\\
&+\frac{1}{16\pi}\int_{B_\rho(a)}\abs{\vec {\bf H}}^2\rd\mu+\frac{1}{16\pi}\int_{\hat B_\rho(a)}\abs{\vec {\bf H}}^2\rd\mu+\abs{R(\rho)}+\abs{R(\sigma)}\\
\leq&(1+\ep)\left(\frac{(\mu-\cos\theta\eta)(B_\rho(a))}{\pi \rho^2}+\frac{(\mu-\cos\theta\eta)(\hat B_\rho(a))}{\pi \rho^2}\right)\\
&+\frac{1+2\ep^{-1}}{16\pi}(\int_{B_\rho(a)}\abs{\vec {\bf H}}^2\rd\mu+\int_{\hat B_\rho(a)}\abs{\vec {\bf H}}^2\rd\mu)\\
&+\ep\left(\frac{(\mu-\cos\theta\eta)(B_\sigma(a))}{\pi \sigma^2}+\frac{(\mu-\cos\theta\eta)(\hat B_\sigma(a))}{\pi\sigma^2}\right).
}
In particular, since $\frac14<\ep<\frac12$, this yields for $0<r<R<\infty$
\eq{\label{eq-KS04-(A.6)}
&\frac{(\mu-\cos\theta\eta)(B_r(a))}{\pi r^2}+\frac{(\mu-\cos\theta\eta)(\hat B_r(a))}{\pi r^2}\\
\leq&3\left(\frac{(\mu-\cos\theta\eta)(B_R(a))}{\pi R^2}+\frac{(\mu-\cos\theta\eta)(\hat B_R(a))}{\pi R^2}\right)+\frac{9}{8\pi}\int_{\mbR^3}\abs{\vec {\bf H}}^2\rd\mu
<\infty,
}
which, together with \eqref{ineq-KS04-(A.5)}, gives
\eq{
\lim_{r\ra0^+}R(r)=0,
}
implying that the tilde-density $\tilde\Theta(\mu-\cos\theta\eta,a)$ exists, and also
\eq{
\tilde\Theta(\mu-\cos\theta\eta,a)
\leq \left(\frac{(\mu-\cos\theta\eta)(B_\rho(a))}{\pi \rho^2}+\frac{(\mu-\cos\theta\eta)(\hat B_\rho(a))}{\pi \rho^2}\right)\\
+\frac{1}{16\pi}(\int_{B_\rho(a)}\abs{\vec {\bf H}}^2\rd\mu+\int_{\hat B_\rho(a)}\abs{\vec {\bf H}}^2\rd\mu)+R(\rho).
}
Thus for a fixed $R>0$ and a sequence of points in $\mbR^3$ such that $x_j\ra a$, we have: for $0<\rho<R$, it holds that
\eq{ &
\frac{(\mu-\cos\theta\eta)(\overline{B_\rho(a)})}{\pi\rho^2}+\frac{(\mu-\cos\theta\eta)(\overline{\hat B_\rho(a)})}{\pi\rho^2}\\
&\geq\limsup_{j\ra\infty}\left(\frac{(\mu-\cos\theta\eta)(\overline{B_\rho(x_j)})}{\pi\rho^2}+\frac{(\mu-\cos\theta\eta)(\overline{\hat B_\rho(x_j)})}{\pi\rho^2}\right)\\
&\geq\limsup_{j\ra\infty}\left(\tilde\Theta(\mu-\cos\theta\eta,x_j)-\frac{1}{16\pi}(\int_{B_\rho(x_j)}\abs{\vec {\bf H}}^2\rd\mu+\int_{\hat B_\rho(x_j)}\abs{\vec {\bf H}}^2\rd\mu)-R(\rho)\right)\\&
\overset{\eqref{eq-KS04-(A.6)}}{\geq}\limsup_{j\ra\infty}\tilde\Theta(\mu-\cos\theta\eta,x_j)\\
&\quad\, \,-C\left(\frac{(\mu-\cos\theta\eta)(B_R(a))+(\mu-\cos\theta\eta)(\hat B_R(a))}{\pi R^2}+\mcW(\mu)\right)^\frac12\norm{\vec {\bf H}}_{L^2(B_{2\rho}(a))}.
}
Letting $\rho\ra0^+$, this gives
\eq{
\tilde\Theta^2(\mu-\cos\theta\eta,a)
\geq\limsup_{j\ra\infty}\tilde\Theta(\mu-\cos\theta\eta,x_j),
}
which completes the proof of (1).

To prove (2), notice that for any $a\in\p\mbR^3_+$, $a_3=\tilde a_3=0$, and hence as $\theta\in(0,\pi)$, the function $G_\theta(r)$ is also monotonically nondecreasing for any such $a$.
Following the proof of (1), we conclude (2).
\end{proof}
\begin{remark}
\normalfont
Suppose that $\mu,\eta$ are compactly supported, we may thus use \eqref{ineq-KS04-(A.5)} to find that: for $\theta\in[\frac{\pi}{2},\pi)$, $a\in\mbR^3$,
\eq{
\lim_{r\ra\infty}\abs{R(r)}=0,
}
and obtain
\eq{
\lim_{r\ra\infty}\left(g_a(r)+\hat g_a(r)\right)
=\frac{1}{8\pi}\int_{\mbR^3}\abs{\vec {\bf H}}^2\rd\mu.
}
Thus by letting $\sigma\ra0^+$ and $\rho\ra\infty$ in \eqref{identity-Simon-capillary}, we obtain
\eq{\label{iden-monotone-Theta}
&\frac{1}{\pi}\int_{\mbR^3}\Abs{\frac14\vec {\bf H}+\frac{(x-a)^\perp}{\abs{x-a}^2}}^2\rd\mu
+\frac{1}{\pi}\int_{\mbR^3}\Abs{\frac14\vec {\bf H}+\frac{(x-\tilde a)^\perp}{\abs{x-\tilde a}^2}}^2\rd\mu\\
&-2\cos\theta\int_{\mbR^3}\frac{a_3^2}{\abs{x-a}^4}\rd\eta-2\cos\theta\int_{\mbR^3}\frac{\tilde a_3^2}{\abs{x-\tilde a}^4}\rd\eta\\
=&\frac{1}{8\pi}\int_{\mbR^3}\abs{\vec {\bf H}}^2\rd\mu-\tilde\Theta^2(\mu-\cos\theta\eta,a).
}
This is also true for any $a\in\mbR^3$ and any $\theta\in(0,\frac{\pi}{2})$, provided that $\tilde\Theta^2(\mu-\cos\theta\eta,a)$ exists.

Similarly, for $\theta\in(0,\pi)$ and $a\in\p\mbR^3_+$, we have
\eq{\label{eq2.6}
\frac{2}{\pi}\int_{\mbR^3}\Abs{\frac14\vec {\bf H}+\frac{(x-a)^\perp}{\abs{x-a}^2}}^2\rd\mu
=\frac{1}{8\pi}\int_{\mbR^3}\abs{\vec {\bf H}}^2\rd\mu-\tilde\Theta^2(\mu-\cos\theta \eta,a).
}
\end{remark}

\section{Willmore Energy and Li-Yau-Type Inequality for Capillary Immersions}
\label{Sec-3}
\subsection{Capillary Immersions}\label{Sec-3-capillary-immersion}

Given a  compact orientable smooth surface $\S$, with non-empty boundary $\p\S$.
Let $F:\S\ra\overline{\mbR^3_+}$ be an {\color{black}orientation preserving }proper immersion, that is, $F$ is smooth on the interior of $\S$ and $C^2$ up to the boundary, such that $F({\rm int}\S)\subset\mbR^3_+$ and $F(\p\S)\subset\p\mbR^3_+$. 
In this way, $F$ induces an immersion of $\p\S$ into $\p\mbR^3_+$.

{\color{black}
Fix a global unit normal field $\nu$ to $\S$ along $F$, which determines an orientation on $\S$ and an induced orientation on $\p\S$ given by a tangential vector field $\tau$ along $\p\S$.
Denote by $\mu$ the unit conormal to $\p\S$ in $\S$ so that $\{\tau,\nu, \mu\}$ is compatible with $\{E_1, E_2, E_3\}$ of $\mbR^3$. Let $\bar\nu$ be the unit normal to $\p\S$ in $\p\mbR^3_+$ so that $\{\bar\nu, -E_3\}$ compatible with $\{\nu,\mu\}$.
Given $\theta\in(0,\pi)$, we say that the proper immersion $F:\S\ra\overline{\mbR^3_+}$ is a {\it capillary immersion (or $\theta$-capillary immersion)} with contact angle $\theta$ if the angle determined by $\mu$ and $\bar\nu$ is constant and equals to $\theta$ along $\p\S$, that is, for any $x\in\p\S$,
\eq{\label{condi-capillary}
	\mu(x)=\sin\theta(-E_3)+\cos\theta\bar\nu(x).
}

Abuse of terminology, for the immersion $F: \p\S\to \p\mbR^3_+$, we use $T$ to denote the ``enclosed domain" by $F(\p\S)$, which means the topological boundary $\p T$ is given by $F(\p\S)$ and it has induced orientation by $\tau$. We use $\abs{T}$ to denote the oriented area of $T$, which is defined to be $$\abs{T}=\sum_i {\rm sgn}(T_i)|T_i|,$$
where each $T_i$ is a bounded domain such that $\p T_i$ is a simply closed curve and ${\rm sgn}(T_i)=+1$ if $\bar \nu$ points outward $T_i$, while
${\rm sgn}(T_i)=-1$ if $\bar \nu$ points inward $T_i$.} $|T|$ is the signed area of the so-called ``wetting part''.


{\color{black}
Now given  a $\theta$-capillary immersion $F:\S\ra\overline{\mbR^3_+}$ for $\theta\in(0,\pi)\setminus\{\frac\pi2\}$.
Let $g=F^\ast g_{\rm euc}$ be the pull-back metric and $\mu_g$ be the induced area measure on $\S$.
The induced varifold of $\S$ is given by $V_g=\mu_g\otimes\de_{T_p\S}$, where $T_p\S$ is the tangent space  and
its image as varifold is given by $V=F_\sharp(V_g)$, which is an integral $2$-varifold in $\mfR^3$ with weight measure $\mu
\coloneqq\left(x\mapsto\mcH^0(F^{-1}(x))\right)\mcH^2\llcorner F(\S)$, see \cite{Sim83}*{Section 15}.

Define the following measure:
\eq{
	\eta\coloneqq&{\rm wind}(x)\mcH^2,
}
where ${\rm wind}(x)$ is the winding number of $F(\p\S)$ about $x\in\p\mfR^3$. 
Throughout the paper, in terms of capillary immersion, we always assume that
\eq{\label{condi-winding-number}
{\rm wind}(x)\geq0\text{ for any }x\in\p\mbR^3_+.
}

With this assumption, $\eta$ is a positive Radon measure,
and we may relate to it naturally the $2$-varifold:
\eq{
	W=\eta\otimes\de_{T_x\p\mbR^3_+}.
}
Let $T={\rm spt}\eta\subset\p\mbR^3_+$.
By definition of winding number,  the mass $\mbM(W)$ of $W$ is exactly the oriented area of $T$ (In fact, ${\rm sgn}(T_i)$ will be always $+1$ thanks to \eqref{condi-winding-number}).
\begin{proposition}
    The pair $(V,W)$ above satisfies the contact angle condition $\theta$ with square integrable generalized mean curvature as in Definition \ref{Defn-contact-pair}.
\end{proposition}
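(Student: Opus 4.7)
The plan is to deduce the weak identity \eqref{eq-1st-variation-H-3} from the classical divergence theorem on the smooth immersed surface $(\S,g)$, reduce its boundary term via the capillarity relation, and then transport the resulting line integral along $\p\S$ to an $\eta$-integral over $\p\mbR^3_+$ using a winding-number form of Stokes' theorem on the plane. Square-integrability of $\vec{\bf H}$ is immediate: since $F$ is $C^2$ on the compact surface $\S$, $\abs{\vec{\bf H}}$ is bounded and $\mu$ has finite total mass.

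Fix $X\in C_c^1(\mbR^3;\mbR^3)$ tangent to $\p\mbR^3_+$. Applying the smooth first variation formula on $(\S,g)$ to the vector field $X\circ F$ yields
\eq{
\int_\S{\rm div}_\S X\rd\mu_g
=-\int_\S\vec{\bf H}\cdot X\rd\mu_g+\int_{\p\S}X\cdot\mu_{\p\S}\rd\sigma_g,
}
where $\mu_{\p\S}$ denotes the unit outward conormal to $\p\S$ in $\S$. Since $V=F_\sharp V_g$, the two interior integrals push forward verbatim to integrals against the weight measure $\mu$ on $\mbR^3$. The capillarity relation \eqref{condi-capillary} reads $\mu_{\p\S}=\sin\theta(-E_3)+\cos\theta\,\bar\nu$ along $\p\S$; since $X$ is tangent to $\p\mbR^3_+$, its $E_3$-component vanishes there, so the boundary integrand collapses to $\cos\theta\,X\cdot\bar\nu$, leaving
\eq{
\int_{\mbR^3}{\rm div}_\S X\rd\mu+\int_{\mbR^3}\vec{\bf H}\cdot X\rd\mu
=\cos\theta\int_{\p\S}X\cdot\bar\nu\rd\sigma_g.
}

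It remains to identify the right-hand side with $\cos\theta\int_{\p\mbR^3_+}{\rm div}_{\p\mbR^3_+}X\rd\eta$. This is the planar divergence theorem weighted by the winding number: for any $Y\in C_c^1(\mbR^2;\mbR^2)$ and any $C^1$-immersed closed curve $\gamma$ in $\mbR^2$,
\eq{
\int_\gamma Y\cdot\bar\nu\rd\sigma
=\int_{\mbR^2}{\rm wind}_\gamma(x)\,{\rm div}\,Y(x)\rd x.
}
One establishes this by partitioning $\mbR^2\setminus\gamma$ into connected components $T_i$ on each of which ${\rm wind}_\gamma$ is constant, applying Stokes' theorem to each $T_i$, and summing with signs matching the convention ${\rm sgn}(T_i)$ from the definition of $\abs{T}$; under \eqref{condi-winding-number} all signs equal $+1$ and $\eta$ is a positive Radon measure. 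This is the main technical step: it is elementary when $\gamma$ has only finitely many transverse self-intersections, but for a general immersed closed $C^2$-curve (allowing tangential self-intersections or multiply covered arcs) one argues either by a small generic perturbation reducing to the transverse case or directly by degree theory on $\mbR^2\setminus\gamma$ together with the area formula. Specialising to $Y=X|_{\p\mbR^3_+}$ and $\gamma=F|_{\p\S}$ then closes the argument.
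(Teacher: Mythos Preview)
Your proof is correct and complete. The square-integrability argument is fine, the divergence theorem on $(\S,g)$ followed by the capillarity relation \eqref{condi-capillary} is the natural way to isolate the boundary term, and the winding-number weighted planar divergence theorem is precisely what is needed to convert the line integral $\int_{\p\S}X\cdot\bar\nu\,\rd\sigma_g$ into $\int_{\p\mbR^3_+}{\rm div}_{\p\mbR^3_+}X\,\rd\eta$.

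Your route differs from the paper's. The paper argues variationally: given $X$ tangent to $\p\mbR^3_+$, it considers the flow $f_t$ generated by $X$, writes the free energy $\mcF(t)=\mcA(t)-\cos\theta\,\mcW(t)$ with $\mcW(t)=\int_{[0,t]\times\p\S}\Psi^\ast\Om$ the swept-out wetted area, and then computes $\mcF'(0)$ in two ways. On one side, the classical capillary first variation formula (cited from \cite{AS16}) gives $\mcF'(0)=-\int\langle X,\vec{\mathbf H}\rangle\,\rd\mu$ directly, so the boundary term never appears explicitly. On the other side, the paper identifies $\mcA(t)=\mbM((f_t)_\sharp V)$ and $\mcW'(0)=\frac{\rd}{\rd t}\big|_{t=0}\mbM((f_t)_\sharp W)$, the latter because $\mbM(W)$ equals the oriented wetted area; the varifold first variation formula then yields $(\delta V-\cos\theta\,\delta W)[X]=\mcF'(0)$. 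In effect, the identification of the boundary line integral with $\int{\rm div}_{\p\mbR^3_+}X\,\rd\eta$ is hidden inside the statement that $\mcW'(0)$ equals the varifold first variation of $W$.

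Your approach is more hands-on: you make that identification explicit via the winding-number Stokes' theorem, which has the virtue of showing transparently why $\eta={\rm wind}(\cdot)\,\mcH^2$ is the right measure and why the assumption \eqref{condi-winding-number} matters. The paper's approach, by contrast, packages this step into the free energy formalism and a citation, which is tidier but less self-contained. Both are valid; yours trades a black-box citation for an elementary lemma whose non-generic case you rightly flag as requiring a perturbation or degree-theoretic argument.
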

\begin{proof}
In the case that $\S$ is an open surface and $F$ is a properly immersion, the fact that $V$ has square integrable generalized mean curvature in $\mfR^3$ has already been observed in \cite{KS12}*{Section 2.1}.
Here we prove the capillary version.

Let us consider any $X\in C_c^1(\mbR^3;\mbR^3)$ which is tangent to $\p\mbR^3_+$, and let $\{f_t:\mbR^3\ra\mbR^3\}_{\abs{t}<\ep}$ be the induced variation of $X$, that is, 
\eq{
	f_0={\rm id}, f_t(\p\mbR^3_+)\subset\p\mbR^3_+,
	\text{ and } \frac{\rd}{\rd t}\mid_{t=0}f_t=X,
}
which induces a family of immersions: $\{\psi_t\}_{\abs{t}<\ep}$, where $\psi_t=f_t\circ F:\S\ra\overline{\mbR^3_+}$.
$\Psi:(-\ep,\ep)\times\S\ra\overline{\mbR^3_+}$, given by $\Psi(t,p)=\psi_t(p)$, is called an admissible variation.
Let $\xi(p)=\frac{\p\Psi}{\p t}(0,p)$ for $p\in\S$, then $\xi(p)=X(F(p))$.

For this family of immersions, let $\mu_{g_t}$ be the induced area measure of $\S$ with respect to the pull-back metric $g_t=\psi^*_tg_{\rm euc}$, the area functional is then given by
\eq{
	\mcA:(-\ep,\ep)\ra\mbR,\quad 
	\mcA(t)=\int_\S\rd\mu_{g_t}.
}
The wetted area functional $\mcW(t):(-\ep,\ep)\ra\mbR$ is defined by
\eq{
	\mcW(t)
	=\int_{[0,t]\times\p\S}\Psi^\ast\Om,
}
where $\Om$ is the volume form of $\p\mbR^3_+$.
We define the free energy functional by
\eq{
	\mcF:(-\ep,\ep)\ra\mbR,\quad
	\mcF(t)=\mcA(t)-\cos\theta\mcW(t),
}
from the well-known first variation formula (see e.g., \cite{AS16}*{(2.1)}) and the capillary condition, we find (recall that $\xi(p)=X(F(p))$)
\eq{
	\mcF'(0)
	=&-\int_\S\mathbf{H}_\S(p)\left<\xi(p),\nu(p)\right>\rd\mu_g\\
	=&-\int_{F(\S)}\sum_{p\in F^{-1}(x)}\mathbf{H}_{\S}(p)\left<\xi(p),\nu(p)\right>\rd\mcH^2(x)\\
	\eqqcolon&-\int\left<X,\vec{\mathbf{H}}\right>\rd\mu.
}
On the other hand, it is not difficult to see that
\eq{
	A(t)
	=\mbM((f_t)_\sharp V),
}
and because $\mbM(W)$ is the oriented area,
\eq{
	\mcW'(0)
	=\frac{\rd}{\rd t}\mid_{t=0}\mbM((f_t)_\sharp W).
}
By virtue of the well-known first variation formula for varifolds \cite{Sim83}, we get
\eq{
	(\de V-\cos\theta\de W)[X]
	=\mcF'(0)
	=-\int\left<X,\vec{\mathbf{H}}\right>\rd\mu,
}
as desired.
\end{proof}

\begin{remark}
\normalfont
We point out that condition \eqref{condi-winding-number} is irredundant, because without this constraint, one could simply construct an example as Fig. \ref{Fig-1}, so that we could find some point $x\in F(\p\S)$ at which we no longer expect the upper-semi continuity of the tilde density (Proposition \ref{Prop-uppersemi-continuous}(1)) to be valid.

Precisely, given $\theta\in(\frac\pi2,\pi)$, let $F:\S\ra\overline{\mbR^3_+}$ be a $\theta$-capillary immersion with $F(\p\S)$ as in Fig. \ref{Fig-1}.
At the point $x$, because of the orientation of $F(\p\S)$, we have
\eq{
\lim_{r\ra0^+}\frac{\eta(B_r(x))}{\pi r^2}
=-\frac12,
}
and hence
\eq{
\tilde\Theta^2(\mu-\cos\theta \eta,x)
=1+\cos\theta<1,
}
implying that the function $\cdot\mapsto\tilde\Theta(\mu-\cos\theta\eta,\cdot)$ is not upper semi-continuous at $x$.
\end{remark}

\begin{figure}[H]
	\centering
	\includegraphics[width=12cm]{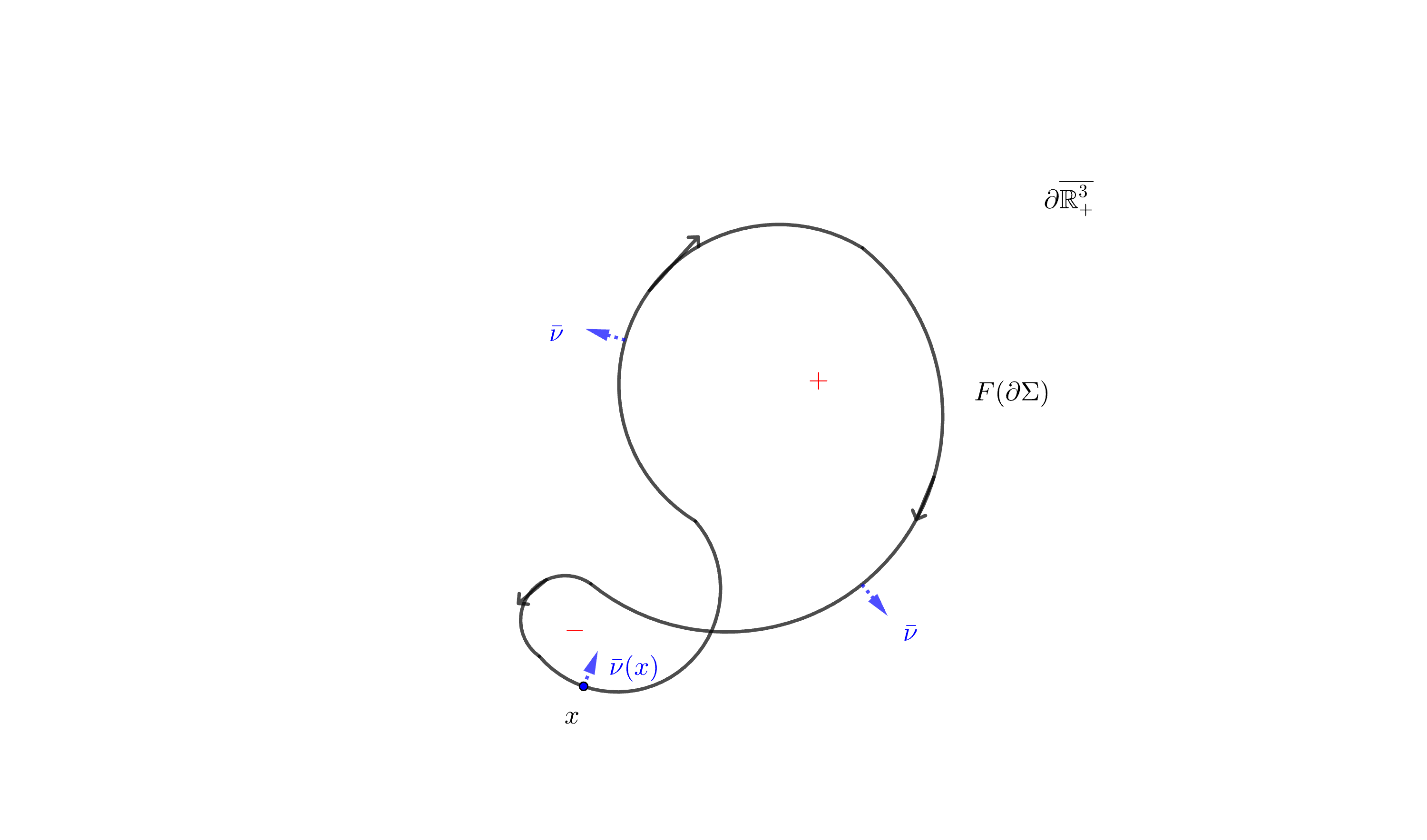}
	\caption{Example.}
	\label{Fig-1}
\end{figure}

}

\subsection{Willmore Energy in the Half-Space}\label{Sec-3-1}
The Willmore energy $\mcW(F)$ of a smooth immersed compact connected orientable surface $F:\S\ra\mbR^3$ with boundary $\p \S$, which may have several connected components, is usually proposed by 
\eq{\label{eq3.1}
	\mcW_0(F)
\coloneqq\frac14\int_{\S} \abs{{\bf H_\S}}^2\rd\mu_g+\int_{\p \S}\kappa_g,
}
where $\kappa_g$ denotes the geodesic curvature of $\p\S$ as a submanifold of $\S$.
The reason for proposing \eqref{eq3.1} is to keep the peculiarity of the Willmore functional—the invariance under the conformal transformation of the ambient space.
Thanks to the well-known Gauß-Bonnet theorem, the Willmore energy may be rewritten as
\eq{\label{eq3.2}\mcW_0(F)=\frac 14 \int_\Sigma\abs{\mathring{A}
}^2 +2\pi\chi (\S),
}
where $\mathring{A}$ is the traceless second fundamental form and $\chi(\S)$ is the Euler characteristic,
it is clear that the first term in \eqref{eq3.2} is conformal invariant, while the second term is topological invariant. See for example \cite{Schatzle10}.

The point in the above argument is that, in order to keep the conformal invariance, one may indeed add or subtract a topological quantity to the Willmore energy functional.
This insight motivates us to drop the term $\int_\S\kappa_g$ when considering the Willmore energy of the surface $\S$ that is capillary immersed into $\overline{\mbR^3_+}$, since this term is just a multiple of some topological quantity.
Indeed,
for any $\theta$-capillary immersion $F:\S\ra\overline{\mbR^3_+}$, it holds locally
\eq{
	\kappa_g
	=\na_\tau\mu_\S\cdot\tau
	=\na_\tau(\cos\theta\bar\nu-\sin\theta E_{3})\cdot\tau
	=\cos\theta\tilde\kappa_g,
}
where $\tilde\kappa_g$ is the geodesic curvature of $F(\p\S)$ as an {immersion} in $\p \mbR^3_+$,
using the Gauß-Bonnet theorem, one sees
{\color{black}\eq{
\int_{\p\S}\kappa_g
=
\cos\theta\int_{\p \S}\tilde\kappa_g
=2\pi\cos\theta \ {\rm ind}(F(\p\S)),
}
where ${\rm ind}(F(\p\S))$ is rotation index of immersed plane curve $F(\p\S)$, which is again a topological invariant.

Therefore, in this paper, we regard the Willmore functional for capillary hypersurfaces in $\mbR^{n+1}_+$ as 
\eq{\label{eq3.3}
	\mcW(F) &= \mcW_0(F)-2\pi\cos\theta \ {\rm ind}(F(\p\S))\\
&=\frac14\int_{\S} \abs{{\bf H}_\S}^2\rd\mu_g.
}
$W(F)$ is again a conformal invariant with respect to conformal diffeomorphisms of $\mbR^3$. }
It matches the Willmore functional for capillary hypersurfaces in the unit ball as well, which will be discussed in the next subsection.

We may rewrite the Willmore energy for the $\theta$-capillary immersion as:
\eq{
\mcW(F)
=\frac14\int_\S \abs{{\bf H}_\S}^2\rd\mu_g
=\frac14\int\abs{\vec {\bf H}}^2\rd\mu.
}


The first part of Theorem \ref{Thm-Li-Yau-Ineq0} amounts to be a direct application of  the monotonicity formula we obtained above. 


\begin{proof} [Proof of Theorem \ref{Thm-Li-Yau-Ineq0}, (1)-(3)]
Let us consider the point $x_0\in\p\S\subset\p\mbR^3_+$.
In this case, $\tilde x_0=x_0$ and hence $B_r(x_0)=\hat B_r(x_0)$. It follows that
\eq{ \lim_{r\searrow0^+}\frac{\eta(B_r(x_0))+ \eta( \hat B_r(x_0))}{\pi r^2}=N(x_0),
}
for some positive integer $N(x_0) =\mcH^0(F^{-1}(x_0))$,
since {\color{black}$F:\p\S\ra\p\mbR^3_+$ is an immersion}.
Thus we find
\eq{\label{eq3.0}
\tilde\Theta^2(\mu-\cos\theta\eta,x_0)
=2\Theta^2(\mu,x_0)-\cos\theta{\color{black}\cdot N(x_0).}
}
Moreover, since $\mu$ is supported on the upper half-space, $\Theta^2(\mu,x_0)=\frac{N(x_0)}{2}$.
Hence
\eq{\label{eq3.00}
\tilde\Theta^2(\mu-\cos\theta\eta,x_0)
=(1-\cos\theta) N(x_0).
}

By virtue of the above observation, we infer easily from the monotonicity identity \eqref{eq2.6} that
\eq{\label{eq-W(F)}
\mcW(F)
=4\int_{\mbR^3}\Abs{\frac14\vec {\bf H}+\frac{(x-x_0)^\perp}{\abs{x-x_0}^2}}^2\rd\mu+4\pi\Theta^2(\mu,x_0)-2\cos\theta\pi N(x_0).
}
Thus, we obtain the Li-Yau-type inequality:
\eq{
\mcW(F)
\geq 4(1-\cos\theta)\pi\Theta^2(\mu,x_0)=
2(1-\cos\theta)\pi N(x_0),
}
which implies easily that 
\eq{
\mcW(F)
\geq2(1-\cos\theta) \pi,
}
the sharp estimate on the lower bound of Willmore energy.

If $W(F)<4(1-\cos\theta)\pi$, for any $x_0\in \p\S$, the Li-Yau-type inequality \eqref{eq_ly} implies that $N(x_0)=1$, and hence $F:\p\S\ra\p\mbR^3_+$ must be an embedding.

Let us now consider the point $y_0\in\S\setminus\p\S$. 
Observe that
\eq{
\tilde\Theta^2(\mu-\cos\theta\eta,y_0)
=\Theta^2(\mu,y_0).
}
Letting $a=y_0$ in \eqref{iden-monotone-Theta} and recalling the definition of $\mcW(F)$, since $\theta\in[\frac\pi2,\pi)$, we obtain readily
\eq{
2\pi\Theta^2(\mu,y_0)
\leq\mcW(F),  
}
and hence $\Theta^2(\mu,y_0)=1$ if $\mcW(F)<4\pi$, in which case $F:\S\ra\overline{\mbR^3}_+$ must be an embedding. 
This completes the proof.
\end{proof}

\begin{remark}
\normalfont
When $F:\Sigma\ra\overline{\mbR^3_+}$ is an embedding, one can infer easily from \eqref{eq3.0} that
\eq{
\tilde\Theta^2(\mu-\cos\theta\eta,x_0) =1-\cos \theta,
} for $x_0\in \p\S$.
Therefore, it may be convenient to define
\eq{
\frac{\tilde\Theta^2(\mu-\cos\theta\eta,x_0)}{1-\cos\theta}
}
as the \textit{capillary density} of $\S$ at a boundary point $ x_0\in \p\S$.
\end{remark}

A further investigation gives the following characterization of the situation when $\mcW(F)$ attains its minima, which also reveals the sharpness of \eqref{eq_ly2}.
\begin{theorem}\label{Thm-Rigidity-Willmore-1}
Given $\theta\in(0,\pi)$, let $F:\S\ra\mbR^3_+$ be  a capillary immersion.
If $\mcW(F)=2\pi(1-\cos\theta)$,
then $F(\S)$ must be a $\theta$-cap in $\overline{\mbR^3_+}$, i.e., a part of the sphere interesting
$\p\mbR^{n+1}_+$ at the angle $\theta$.
\end{theorem}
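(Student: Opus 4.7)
The plan is to upgrade the equality $\mcW(F) = 2\pi(1-\cos\theta)$ in the Li--Yau-type inequality to a rigid pointwise identity, and then apply a M\"obius inversion to reduce to classifying a minimal surface with capillary boundary. Fix any $x_0 \in F(\p\S)\subset\p\mbR^3_+$. Since $N(x_0)\geq 1$, identity \eqref{eq-W(F)} combined with the assumption forces \emph{simultaneously} $N(x_0)=1$ and the vanishing of the non-negative integrand,
\[
\tfrac14 \vec{\bf H}(x) + \frac{(x-x_0)^\perp}{\abs{x-x_0}^2} = 0 \quad \text{for $\mu$-a.e.\ }x,
\]
which upgrades by continuity (as $F$ is $C^2$ up to $\p\S$) to a pointwise identity on $F(\S)\setminus\{x_0\}$.

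Next, introduce the M\"obius inversion $\Psi(y) \coloneqq x_0 + (y-x_0)/\abs{y-x_0}^2$. Because $x_0 \in \p\mbR^3_+$, $\Psi$ is a conformal involution of $\overline{\mbR^3_+}\setminus\{x_0\}$ preserving $\p\mbR^3_+$ setwise. Setting $\tilde F \coloneqq \Psi \circ F$ on $\S \setminus F^{-1}(x_0)$ and $\tilde\S \coloneqq \tilde F(\S\setminus F^{-1}(x_0))$, the classical conformal transformation law of the mean curvature vector under M\"obius inversions (see \cite{KS04, KS12}) shows that the pointwise identity above is precisely equivalent to the vanishing of the mean curvature vector of $\tilde\S$. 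Hence $\tilde\S$ is a smooth minimal surface; since $\Psi$ preserves angles, $\tilde\S$ still meets $\p\mbR^3_+$ at the constant angle $\theta$.

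Now classify $\tilde\S$. Since $N(x_0)=1$, $F^{-1}(x_0) = \{p_0\}$ is a single boundary point and $F$ is a smooth embedding near $p_0$; a first-order expansion of $\Psi\circ F$ at $p_0$ shows that, outside a sufficiently large ball, $\tilde\S$ is a smooth graph over a single affine half-plane $\Pi \subset \overline{\mbR^3_+}$ that meets $\p\mbR^3_+$ along a straight line at angle $\theta$. Thus $\tilde\S$ is a properly immersed complete minimal capillary surface with a unique planar end asymptotic to $\Pi$. A blow-down/tangent-cone-at-infinity argument combined with unique continuation (and the strong maximum principle at the capillary boundary) then forces $\tilde\S = \Pi$. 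Applying $\Psi$ once more, $F(\S) = \Psi(\Pi)$ is the image under $\Psi$ of an affine half-plane not passing through $x_0$, which is precisely a portion of a round sphere through $x_0$ meeting $\p\mbR^3_+$ at angle $\theta$, i.e.\ a $\theta$-spherical cap.

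The main obstacle is the rigidity step in the third paragraph: upgrading ``asymptotic to $\Pi$'' to ``equals $\Pi$.'' When $\theta = \pi/2$ this reduces via Schwarz reflection across $\p\mbR^3_+$ to the classical Li--Yau rigidity for closed Willmore spheres (so the reflected closed surface is a round sphere and $\tilde\S=\Pi$). For general $\theta\neq\pi/2$ the reflected surface has a crease along $\p\S$, and a direct capillary Liouville-type statement is needed, which can be handled by careful asymptotic analysis exploiting the finiteness of $\mcW(F)$ transported through the conformal invariance of the Willmore integrand.
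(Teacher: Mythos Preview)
Your opening step — deducing from \eqref{eq-W(F)} that $N(x_0)=1$ and that
\[
\tfrac14\vec{\bf H}(x)+\frac{(x-x_0)^\perp}{\abs{x-x_0}^2}=0\quad\text{for }\mu\text{-a.e.\ }x
\]
— is exactly how the paper begins as well. From that point on, however, the two arguments diverge, and the divergence is significant.

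The paper does \emph{not} fix a single $x_0$ and perform a M\"obius inversion. Instead it exploits the fact that the identity above holds for \emph{every} $x_0\in F(\p\S)$ simultaneously. Choosing a single interior point $y$ with $\vec{\bf H}(y)\neq 0$ (which must exist, else $F(\S)$ is a cone through the origin) and letting $x_0$ vary, one reads off immediately that every boundary point lies on a fixed Euclidean circle $\mathfrak{s}\subset\p\mbR^3_+$ of radius $2/\abs{\vec{\bf H}(y)}$. The paper then glues $F(\S)$ along $\mathfrak{s}$ to the unique $(\pi-\theta)$-spherical cap $C_{\pi-\theta}\subset\overline{\mbR^3_-}$, producing a closed integral $2$-varifold $\tilde\mu$ with square-integrable mean curvature and $\mcW(\tilde\mu)=\mcW(F)+\mcW(C_{\pi-\theta})=2\pi(1-\cos\theta)+2\pi(1+\cos\theta)=4\pi$. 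The classical closed-surface Li--Yau rigidity (\cite{KS04}, \cite{KS12}) then forces $\tilde\mu$ to be a round $2$-sphere, so $F(\S)$ is a $\theta$-cap. This reduction to the closed case is completely clean and works uniformly for all $\theta\in(0,\pi)$.

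Your route via M\"obius inversion is conceptually natural, but as you yourself flag, the entire weight of the argument lands on the Liouville step: showing that a complete properly immersed minimal surface in $\overline{\mbR^3_+}$ with constant contact angle $\theta$ and a single planar end must be a half-plane. For $\theta=\tfrac\pi2$ this follows by Schwarz reflection and the closed-surface rigidity, but for $\theta\neq\tfrac\pi2$ the reflected object is not smooth, and the sentence ``can be handled by careful asymptotic analysis exploiting the finiteness of $\mcW(F)$ transported through the conformal invariance of the Willmore integrand'' does not actually supply an argument: after inversion $\tilde\S$ is minimal, so its Willmore integrand is identically zero and conformal invariance gives you nothing further beyond the planar-end asymptotics you already extracted. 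A genuine capillary Bernstein/Liouville theorem of this type is not standard and is not a result you can cite off the shelf; proving it from scratch would be considerably more involved than the paper's gluing trick. So as written, this is a real gap, and the paper's approach — varying $x_0$ to pin down the boundary circle, then capping off in the lower half-space — is both simpler and complete.
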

\begin{proof}
We begin by noticing that $F:\p\S\ra\p\mbR^3_+$ is an embedding thanks to Theorem \ref{Thm-Li-Yau-Ineq0}.
In particular we learn from \eqref{eq-W(F)}
that
\eq{\label{eq-Volkmann16-(15)}
\frac14\vec {\bf H}(x)+\frac{(x-x_0)^\perp}{\abs{x-x_0}^2}=0
}
for $\mu$-a.e. $x\in F(\S)$ and any $x_0\in F(\p\S)$.
It is not difficult to observe that there must be some $y\in F(\S\setminus\p\S)$ such that $\vec {\bf H}(y)\neq0$, otherwise for $\mu$-a.e. $x\in F(\S)$, one has (up to a translation, we assume that $x_0=0$)
\eq{
x\cdot\nu(x)=0,
}
implying that the enclosing region of $F(\S)$ with $\p\mfR^3_+$ is a cone (see e.g. \cite{Mag12}*{Proposition 28.8}), which is not possible.
Consequently, by letting $x=y$ in \eqref{eq-Volkmann16-(15)}, we obtain for any $x_0\in F(\p\S)$ that
\eq{
\Abs{x_0-(y-\frac{2\nu (y)}{\abs{\vec {\bf H}(y)}})}^2
=\frac 4{\abs{\vec {\bf H}(y)}^2},
}
which shows that $F(\p\S)$ is indeed a $1$-dimensional sphere on $\p\mbR^3_+$, say $\mathfrak{s}$.

{\color{black}
To proceed, we consider the unique spherical cap in $\overline{\mbR^3_-}$, say $C_{\pi-\theta}$, which intersects $\p\mbR^3_+$ along $\mathfrak{s}$ with the constant angle $(\pi-\theta)$.
Let $V_{C_{\pi-\theta}}$ denote the naturally induced varifold of $C_{\pi-\theta}$, with weight measure denoted by $\mu_{C_{\pi-\theta}}$, which is exactly given by $\mu_{C_{\pi-\theta}}=\mcH^2\llcorner C_{\pi-\theta}$.
Write $\tilde\mu=\mu+\mu_{C_{\pi-\theta}}$, $\tilde\S={\rm spt}(\tilde\mu)=F(\S)\cup C_{\pi-\theta}$.

Since $F$ is a $\theta$-capillary immersion with $F:\p\S\ra\p\mbR^3_+$ an embedding, and because of the construction of $C_{\pi-\theta}$,  we see that the integral varifold $\tilde\mu$ has the following variational structure:
for any $X\in C_c^1(\mbR^3;\mbR^3)$,
\eq{
\int{\rm div}_{\tilde\S} X\rd\tilde\mu
=-\int_{\tilde\S}X\cdot\vec {\bf H}\rd\tilde\mu,
}
where $\vec {\bf H}$ is  the generalized mean curvature vector of $\tilde\S$, which is square integrable on $\tilde\S$.

Moreover, a direct computation shows that for the spherical cap $C_{\pi-\theta}$,
\eq{
\mcW(C_{\pi-\theta})
=\frac14\int_{C_{\pi-\theta}} \abs{\vec {\bf H}}^2\rd\mcH^2
=2(1-\cos (\pi-\theta)
)\pi=2(1+\cos \theta)\pi
,
}
which, together
with $\mcW(F)=2(1-\cos \theta)\pi$, yields
\eq{
\mcW(\tilde\mu)
=\frac14\int\abs{\vec {\bf H}}^2\rd\tilde\mu
=4\pi.
}
Here $\mcW(\tilde\mu)$ is the Willmore energy of $\tilde\mu$ in $\mbR^3$, see \cite{KS04}*{Appendix A}.
It is worth noting that a simple modification of \cite{Volkmann16}*{Proposition 4.3} in conjunction with \cite{KS12}*{Proposition 2.1.1} shows that for any compactly supported integral $2$-varifold with square integrable generalized mean curvature, if its Willmore energy is $4\pi$, then its support is exactly a closed round $2$-sphere.
Therefore we conclude that $\tilde\S$ must be a closed round $2$-sphere, and hence $F(\S)$ is a $\theta$-capillary spherical cap as desired.
}
\end{proof}
The last part of the assertion in Theorem \ref{Thm-Li-Yau-Ineq0}
is simply given by Theorem \ref{Thm-Rigidity-Willmore-1}.

\subsection{Willmore Energy in the Unit Ball}\label{sec-3-2}
Due to the conformal invariance of the Willmore functional, the results above can be transferred directly to hold for capillary surfaces in the unit ball.

It is well-known that $\mcW_0$, and hence $\mcW$ defined in \eqref{eq3.3}, are invariant under conformal transformations.
Therefore the Willmore functional for capillary surfaces in the unit ball is the same as $\mcW$,
i.e.,
{\color{black}\eq{\label{defn-W(F)-ball}
\mcW(F)
\coloneqq\frac14\int_{\S} \abs{\vec {\bf H}}^2\rd\mu_g+\int_{\p \S} \kappa_g-2\pi\cos\theta \ {\rm ind}(F(\p \S)) .
}
Here ${\rm ind}(F(\p \S))$ is the rotation index of the immersed plane curve  $\phi\circ F(\p \S)$ where $\phi: \mbS^3\setminus\{p\}\to\mbR^3$ is the stereographic projection for some $p\notin \p\S$.}

Now in this case, the geodesic curvature 
is given by
\eq{
	\kappa_g
	=\na_\tau\mu_\S\cdot\tau
	=\na_\tau(\cos\theta\bar\nu+\sin\theta \bar N(x))\cdot\tau
	=\cos\theta\tilde\kappa_g+\sin\theta,
}
where $\bar N(x)=x$ is the outer unit normal of $\mbB^3$ and $\tilde \kappa_g$ is the geodesic curvature of $F:\p\S\to \mbS^2$. {\color{black}By the Gauß-Bonnet theorem, we have
\eq{
2\pi \ {\rm ind}(F(\p \S))
=\int_{\p\S}\tilde\kappa_g +\abs{T},
}
where $\abs{T}$ is the oriented area of $T$.}
Altogether implying that 
\eq{\label{defn-Willmore-ball}
\mcW(F)
=\frac14\int_{\S} \abs{\vec {\bf H}}^2\rd\mu_g+
\sin\theta \abs{\p\Sigma} -\cos\theta\abs{T}.
}

\begin{corollary}\label{coro3.4}
Given $\theta\in(0,\pi)$, let $F:\S\ra\overline{\mbB^3}$ be a $\theta$-capillary immersion.
Then, there holds
\eq{
\mcW(F)
\geq2(1-\cos\theta)\pi.
}
Equality holds if and only if $F(\S)$ is a spherical cap or a  totally geodesic disk in $\overline{\mbB^3}$.
\end{corollary}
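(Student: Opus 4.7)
The plan is to transplant Theorem \ref{Thm-Li-Yau-Ineq0}(2) from the half-space to the unit ball via a conformal diffeomorphism, exploiting the conformal invariance of the Willmore energy that was built into the definition \eqref{defn-W(F)-ball}.

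First I would pick a M\"obius transformation $\Phi:\overline{\mbB^3}\to\overline{\mbR^3_+}$, for instance an inversion centered at a point of $\mbS^2\setminus F(\p\S)$ followed by a rigid motion, so that $\Phi(\mbB^3)=\mbR^3_+$ and $\Phi(\mbS^2\setminus\{p\})=\p\mbR^3_+$. Since $\Phi$ is conformal and preserves angles between tangent planes, $\tilde F \coloneqq \Phi\circ F:\S\to\overline{\mbR^3_+}$ is again a $\theta$-capillary immersion. The conformal invariance of $\mcW$ — which is precisely what motivates the specific combination of mean-curvature, boundary-length and wetting-area terms in \eqref{defn-Willmore-ball} — yields $\mcW(F)=\mcW(\tilde F)$. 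Applying Theorem \ref{Thm-Li-Yau-Ineq0}(2) to $\tilde F$ then produces
\[
\mcW(F) \;=\; \mcW(\tilde F) \;\geq\; 2(1-\cos\theta)\pi,
\]
which is the desired inequality.

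For the rigidity statement, saturation forces $\mcW(\tilde F)=2(1-\cos\theta)\pi$, and Theorem \ref{Thm-Rigidity-Willmore-1} identifies $\tilde F(\S)$ as a $\theta$-spherical cap in $\overline{\mbR^3_+}$. Since $\Phi^{-1}$ sends spheres and affine hyperplanes to spheres and affine hyperplanes while preserving contact angles with $\mbS^2$, pulling back presents $F(\S)$ as either a spherical cap meeting $\mbS^2$ at angle $\theta$ — when the sphere carrying $\tilde F(\S)$ avoids the center of inversion — or a flat (hence totally geodesic in the Euclidean sense) disk meeting $\mbS^2$ at angle $\theta$, when that sphere passes through the center. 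The converse direction is a direct computation: one verifies from \eqref{defn-Willmore-ball} that both $\theta$-spherical caps and flat $\theta$-disks in $\overline{\mbB^3}$ saturate the bound.

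The main obstacle is the bookkeeping needed to legitimately invoke Theorem \ref{Thm-Li-Yau-Ineq0}: one must check (i) that $\Phi$ transports the signed contact-angle relation \eqref{condi-capillary} correctly, which amounts to tracking the conormal $\mu$ and the reference directions $\bar\nu,\,-E_3$ through $\Phi$ using its conformality; and (ii) that the winding-number hypothesis \eqref{condi-winding-number} used for the half-space image $\tilde F$ can be arranged by choosing the center of inversion in a generic position on $\mbS^2$, with the remaining special positions recovered by a limiting argument. The verification that the corrective terms $\sin\theta|\p\S|-\cos\theta|T|$ in \eqref{defn-Willmore-ball} precisely compensate the non-invariance of $\int|\vec{\mathbf H}|^2\rd\mu_g$ under $\Phi$ — essentially the Gauß--Bonnet manipulation carried out just above the statement — is the technical linchpin. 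Once these compatibility checks are in place, the corollary follows as a formal consequence of Theorems \ref{Thm-Li-Yau-Ineq0} and \ref{Thm-Rigidity-Willmore-1}.
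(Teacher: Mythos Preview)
Your proposal is correct and follows essentially the same route as the paper: transfer the inequality from the half-space via the conformal invariance of $\mcW$ and Theorem~\ref{Thm-Li-Yau-Ineq0}, then handle rigidity by invoking Theorem~\ref{Thm-Rigidity-Willmore-1} and the fact that M\"obius transformations preserve umbilicity (equivalently, send spheres/planes to spheres/planes). The paper's proof is terser and simply states that umbilicity is conformally invariant, while you spell out the dichotomy via the position of the center of inversion; your additional remarks on verifying \eqref{condi-capillary} and \eqref{condi-winding-number} after pushing forward by $\Phi$ are reasonable bookkeeping that the paper leaves implicit.
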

\begin{proof}
As explained above, the inequality holds thanks to Theorem \ref{Thm-Li-Yau-Ineq0} and it suffices to consider the characterization of equality case.
Observe that in the half-space case, equality is achieved if and only if the capillary surfaces are spherical.
Since umbilicity preserves under conformal transformations, we readily see that in the half-ball case, $F(\S)$ should be either a  spherical cap or a totally geodesic disk. 
\end{proof}
If $\theta=\frac\pi2$, Corollary \ref{coro3.4} was proved in \cite{Volkmann16}.
Now we show how it simply implies Theorem \ref{coro3.5}.
\begin{proof}[Proof of Theorem \ref{coro3.5}]
From Corollary \ref{coro3.4},we find
\eq{
\sin\theta\abs{\p\S}-\cos\theta\abs{T} \geq2(1-\cos\theta)\pi.
}
From ${\rm div} _\S x=2$ in $\S$ we have
\eq{\label{eq-S-pS}
2\abs{\Sigma} &=\int_{\p\S}\<x, \mu_\S\>
=\int_{\p\S}\<\cos\theta \bar \nu+\sin\theta \bar N(x),x\>\\
&=\sin \theta \abs{\p\S}.
}
\eqref{eq_coro3.5} follows clearly.
\end{proof}

\section{Monotonicity Identities in the Unit Ball}\label{Sec-4}
\subsection{Set-ups}
Let $\mbB^3\subset\mbR^3$ be the Euclidean unit ball centered at the origin, $\mbS^2\coloneqq\p\mbB^3$ denotes the corresponding unit sphere.

In this section we adopt the following notations:
let $\xi:\mbR^3\setminus\{0\}\ra\mbR^3$ denote the spherical inversion with respect to $\mbS^2$.
Fix $x_0\in\mbR^3$ and $r>0$, $B_r(x_0)$ denotes the open ball of radius $r$ centered at $x_0$, and
\eq{
\hat B_r(x_0)
=B_{\frac{r}{\abs{x_0}}}(\xi(x_0))
}
denotes the ball of radius $\frac{r}{\abs{x_0}}$ centered at $\xi(x_0)$.
A direct computation then shows that: for $x\in\mbS^2$,
\eq{\label{eq-x_0-xi(x_0)}
\abs{x_0}\abs{x-\xi(x_0)}
=\abs{x-x_0},
}
in other words, $B_r(x_0)\cap\mbS^2=\hat B_r(x_0)\cap\mbS^2$.

Similar with Definition \ref{Defn-contact-pair}, we consider the following weak formulation of capillary surfaces in the unit ball.
\begin{definition}\label{Defn-contact-pair-Ball}
\normalfont
Given $\theta\in(0,\pi)$, let $V$ be a rectifiable $2$-varifold supported on $\overline{\mbB^3}$ with its weight measure denoted by $\mu$, let $W$ be a rectifiable 2-varifold supported on $\mbS^2$ with its weight measure denoted by $\eta$.

$(V,W)$ is said to \textit{satisfy the contact angle condition $\theta$} if there exists a $\mu$-measurable vector field $\vec {\bf H}\in\mcL^1(\mbR^3,\mu)$ with $\vec {\bf H}(x)\in T_x\mbS^2$ for $\mu$-a.e. $x\in\mbS^2$, such that for every $X\in C_c^1(\mbR^3;\mbR^3)$ with $X$ tangent to $\mbS^2$, it holds that
\eq{\label{eq-1stvariation-capillary}
\int_{\mbR^3}{\rm div}_\S X\rd\mu-\cos\theta\int_{\mbS^2}{\rm div}_{\mbS^2}X\rd\eta
=-\int_{\mbB^3}\vec {\bf H}\cdot X\rd\mu,
}
where $\S\coloneqq{\rm spt}(\mu)$ is countably $2$-rectifiable.
\end{definition}
An important proposition for the pairs of varifolds satisfying contact angle condition is that they have
bounded first variation and satisfy the following first variation formula (see \cite{MP21}*{Proposition 3.1}).
\begin{proposition}\label{Prop-bounded-1st-variation}
Given $\theta\in[\frac{\pi}{2},\pi)$, let $(V,W)$ be as in Definition \ref{Defn-contact-pair}.
Then $V-\cos\theta W$ has bounded first variation.
More precisely, there exists a positive Radon measure $\sigma_V$ on $ \mbS^2$ such that
\eq{\label{eq-1stvariation-1}
\int_{\mbR^3}{\rm div}_\S X\rd\mu-\cos\theta\int_{\mbS^2}{\rm div}_{\mbS^2}X\rd\eta
=-\int_{\mbB^3}\vec {\bf H}\cdot X\rd\mu\\
+2\int_{\mbS^2} X(x)\cdot x\rd(\mu-\cos\theta\eta)+\int_{\mbS^2}X(x)\cdot x\rd\sigma_V
}
for every $X\in C_c^1(\mbR^3;\mbR^3)$.
\end{proposition}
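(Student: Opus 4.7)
The plan is to extend the first variation identity \eqref{eq-1stvariation-capillary}, which is assumed only for vector fields tangent to $\mbS^2$, to arbitrary test fields $X\in C_c^1(\mbR^3;\mbR^3)$. The strategy is to split $X$ into a tangential component (to which \eqref{eq-1stvariation-capillary} applies directly) and a radial component supported near $\mbS^2$; the radial component produces the explicit boundary term $2\int_{\mbS^2}(X\cdot x)\rd(\mu-\cos\theta\eta)$, while the residual interior contributions assemble into the boundary Radon measure $\sigma_V$.

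Fix $\phi\in C_c^\infty((1/2,3/2))$ with $\phi(1)=1$ and, given $X\in C_c^1(\mbR^3;\mbR^3)$, set $Z(x)\coloneqq\phi(|x|)(X(x)\cdot x/|x|)\,x/|x|$ and $Y\coloneqq X-Z$. Then $Y\in C_c^1(\mbR^3;\mbR^3)$ is tangent to $\mbS^2$, since $Z|_{\mbS^2}=(X\cdot x)x$ is exactly the normal projection of $X$. Applying \eqref{eq-1stvariation-capillary} to $Y$ and rearranging via $X=Y+Z$ yields
\[
\begin{aligned}
&\int{\rm div}_\S X\rd\mu-\cos\theta\int_{\mbS^2}{\rm div}_{\mbS^2}X\rd\eta+\int\vec{\mathbf H}\cdot X\rd\mu \\
&\qquad=\int{\rm div}_\S Z\rd\mu-\cos\theta\int_{\mbS^2}{\rm div}_{\mbS^2}Z\rd\eta+\int\vec{\mathbf H}\cdot Z\rd\mu.
\end{aligned}
\]
A direct tangential computation, using $\partial_{e_i}(x/|x|)=e_i$ and $e_i\cdot x=0$ for $e_i\in T_x\mbS^2$, shows ${\rm div}_{\mbS^2}Z=2(X\cdot x)$ on $\mbS^2$, and the same identity holds for ${\rm div}_\S Z$ at $\mu$-a.e.\ $x\in{\rm spt}\,\mu\cap\mbS^2$, where by rectifiability the approximate tangent plane coincides with $T_x\mbS^2$. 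Since $\vec{\mathbf H}\in T_x\mbS^2$ for $\mu$-a.e.\ $x\in\mbS^2$ while $Z\perp T_x\mbS^2$ there, $\vec{\mathbf H}\cdot Z=0$ $\mu$-a.e.\ on $\mbS^2$. Isolating the boundary contributions gives
\[
\int{\rm div}_\S X\rd\mu-\cos\theta\!\int_{\mbS^2}\!{\rm div}_{\mbS^2}X\rd\eta+\int\vec{\mathbf H}\cdot X\rd\mu-2\!\int_{\mbS^2}\!(X\cdot x)\rd(\mu-\cos\theta\eta)=L(X),
\]
where $L(X)$ is the remaining interior contribution of $Z$.

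It remains to identify $L(X)$ as the pairing with a positive Radon measure on $\mbS^2$. A direct integration by parts on smooth $\theta$-capillary surfaces $(\S_n,T_n)$ approximating $(V,W)$, combined with the contact-angle decomposition of the conormal $\mu_{\S_n}=\cos\theta\,\bar\nu_n+\sin\theta\,x$ on $\partial\S_n$, identifies the smooth-case candidate $\sigma_V=\sin\theta\,\mcH^1\llcorner\partial\S$, which is manifestly positive since $\sin\theta>0$. The main obstacle is to extend this identification to the varifold setting: one must show (i) $L$ extends to a bounded linear functional on $C^0(\mbS^2)$ depending only on $(X\cdot x)|_{\mbS^2}$, which is achieved by taking cutoffs $\phi_\varepsilon$ with support shrinking to $\{|x|=1\}$ and controlling the interior ${\rm div}_\S Z$-terms through the mass of $\mu$ in a collar of $\mbS^2$; and (ii) $L(X)\geq 0$ whenever $X\cdot x\geq 0$ on $\mbS^2$. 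The hypothesis $\theta\in[\pi/2,\pi)$, equivalently $-\cos\theta\geq 0$, is essential for (ii): in the retraction step preserving ${\rm spt}\,\mu\subset\overline{\mbB^3}$, the sign of $\cos\theta$ governs whether the $W$-contribution reinforces or opposes the nonnegative boundary pressure exerted by $\mbS^2$ on $\mu$, and only the former sign admits the argument of \cite{MP21}*{Proposition 3.1}. Given (i) and (ii), the Riesz representation theorem on $C^0(\mbS^2)$ produces a unique positive Radon measure $\sigma_V$ with $L(X)=\int_{\mbS^2}(X\cdot x)\rd\sigma_V$, yielding simultaneously the bounded first variation of $V-\cos\theta W$ and the explicit formula \eqref{eq-1stvariation-1}.
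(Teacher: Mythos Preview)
The paper does not supply its own proof of this proposition; it simply cites \cite{MP21}*{Proposition 3.1}. Your write-up therefore cannot be compared against a proof in the paper, only against the argument in that reference, and in the end you defer to the same reference for the decisive step.

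Your tangential/normal decomposition $X=Y+Z$ and the computation ${\rm div}_{\mbS^2}Z=2(X\cdot x)$ on $\mbS^2$ are correct, and the resulting expression
\[
L(X)=\int_{\mbB^3}{\rm div}_\S Z\,\rd\mu+\int_{\mbB^3}\vec{\mathbf H}\cdot Z\,\rd\mu
\]
is well posed and independent of the choice of cutoff $\phi$ (since the left-hand side of your displayed identity does not see $\phi$). It is also easy to check, as you essentially note, that $L(X)$ depends only on $(X\cdot x)\vert_{\mbS^2}$: if this trace vanishes then $X$ is itself tangent to $\mbS^2$, so \eqref{eq-1stvariation-capillary} applies directly and $L(X)=0$.

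However, the argument is incomplete precisely where the content lies. You explicitly flag (i) boundedness and (ii) positivity of $L$ as ``the main obstacle'' and then do not prove them. Your sketch for (i), letting the support of $\phi_\varepsilon$ shrink to $\{|x|=1\}$ and controlling $\int_{\mbB^3}{\rm div}_\S Z\,\rd\mu$ by the mass of $\mu$ in a collar, cannot work as stated: ${\rm div}_\S Z$ contains $\phi_\varepsilon'(|x|)$, which is of order $\varepsilon^{-1}$, so mass control in a collar of width $\varepsilon$ gives no uniform bound. In fact boundedness is a \emph{consequence} of positivity: once $L\geq0$ on nonnegative boundary data, $|L(f)|\leq L(1)\,\|f\|_{C^0(\mbS^2)}$ with $L(1)$ a fixed finite number obtained by testing with any extension of $f\equiv1$. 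So the entire proof reduces to (ii), and for (ii) you invoke \cite{MP21}*{Proposition 3.1} rather than carry out the barrier/retraction argument. That is exactly what the paper does, so your write-up is an expanded restatement of the citation rather than an independent proof.

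One further point: your claim that ``by rectifiability the approximate tangent plane coincides with $T_x\mbS^2$'' at $\mu$-a.e.\ $x\in\mbS^2$ is true but not for the reason you give; it follows because ${\rm spt}\,\mu\subset\overline{\mbB^3}$, so any blow-up at $x\in\mbS^2$ is supported in a half-space and a $2$-plane contained in a closed half-space must be the boundary plane. This deserves a sentence of justification.
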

If $\S$ is in fact a smooth capillary surface embedded in $\mbB^3$, then we may use a classical computation to see that the above formula is true with $\sigma_V=\sin\theta\mcH^2\llcorner\p\S$.
This motivates us to define $\gamma\coloneqq\frac{1}{\sin\theta}\sigma_V$ as the generalized boundary measure of $V$ and ${\rm spt}\gamma$ as the generalized boundary of $V$ (which are different from the ones obtained from Lebesgue's decomposition theorem).

In all follows, we consider only those $V$ that are integral $2$-varifolds with $\mu(\mbS^2)=0$ and $\vec {\bf H}\in\mcL^2(\mbR^3,\mu)$,
it follows that \eqref{eq-Volkmann16-(8)} holds.
Moreover, we may rewrite \eqref{eq-1stvariation-1} as
\eq{\label{eq-1stvariation-2}
\int_{\mbR^3}{\rm div}_\S X\rd\mu-\cos\theta\int_{\mbS^2}{\rm div}_{\mbS^2}X\rd\eta
=-\int_{\mbB^3}\vec {\bf H}\cdot X\rd\mu\\
-2\cos\theta\int_{\mbS^2}X(x)\cdot x\rd\eta+\sin\theta\int_{\mbS^2}X(x)\cdot x\rd\gamma
}
for every $X\in C_c^1(\mbR^3;\mbR^3)$.
An immediate consequence is that we may use a compactly supported vector field which coincides with the position vector field in a neighborhood of $\mbB^3$ to test \eqref{eq-1stvariation-2} and obtain
\eq{\label{eq-Volkmann16-(4)}
2\mu(\mbR^3)
=-\int_{\mbB^3}\vec {\bf H}\cdot x\rd\mu+\sin\theta\gamma(\mbS^2).
}
\subsection{Simon-Type Monotonicity Formulas}
Let us first recall that in \cite{Volkmann16}, a Simon-type monotonicity formula is proved for integral $2$-varifold $\mu$ with $\mu(\mbS^2)=0$, that has free boundary in $\overline{\mbB^3}$.
Precisely, it is proved that for $x_0\neq0$,
\eq{\label{eq-Volkmann16-(5)} &
\frac{1}{\pi}\int_{B_\rho(x_0)\setminus B_\sigma(x_0)}\Abs{\frac14\vec {\bf H}+\frac{(x-x_0)^\perp}{\abs{x-x_0}^2}}^2\rd\mu
+\frac{1}{\pi}\int_{\hat B_\rho(x_0)\setminus\hat B_\sigma(x_0)}\Abs{\frac14\vec {\bf H}+\frac{(x-\xi(x_0))^\perp}{\abs{x-\xi(x_0)}^2}}^2\rd\mu\\
=&\left(g_{x_0}(\rho)+\hat g_{x_0}(\rho)\right)-\left(g_{x_0}(\sigma)+\hat g_{x_0}(\sigma)\right),
}
where
\eq{
g_{x_0}(r)
\coloneqq
\frac{\mu(B_r(x_0))}{\pi r^2}+\frac{1}{16\pi}\int_{B_r(x_0)}\abs{\vec H}^2\rd\mu+\frac{1}{2\pi r^2}\int_{B_r(x_0)}\vec {\bf H}\cdot(x-x_0)\rd\mu,\\
\hat g_{x_0}(r)
\coloneqq
g_{\xi(x_0)}(\frac{r}{\abs{x_0}})-\frac{\abs{x_0}^2}{\pi r^2}\int_{\hat B_r(x_0)}\left(\abs{x-\xi(x_0)}^2+(x-\xi(x_0))^T\cdot x\right)\rd\mu\\
-\frac{\abs{x_0}^2}{2\pi r^2}\int_{\hat B_r(x_0)}\vec {\bf H}\cdot(\abs{x-\xi(x_0)}^2x)\rd\mu
+\frac{1}{2\pi}\int_{\hat B_r(x_0)}\vec {\bf H}\cdot x\rd\mu+\frac{\mu(\hat B_r(x_0))}{\pi},
}
and $(\cdot)^T$ denotes the orthogonal projection of a vector onto the approximate tangent space $T_x\S$.
For $x_0=0$, he obtained
\eq{\label{eq-Volkmann16-(5')}
\frac{1}{\pi}\int_{B_\rho(0)\setminus B_\sigma(0)}\Abs{\frac14\vec {\bf H}+\frac{1}{\pi}\frac{x^\perp}{\abs{x}^2}}^2\rd\mu
=\left(g_0(\rho)+\hat g_0(\rho)\right)-\left(g_0(\sigma)+\hat g_0(\sigma)\right),
}
where
\eq{
g_{0}(r)
\coloneqq&
\frac{\mu(B_r(0))}{\pi r^2}+\frac{1}{16\pi}\int_{B_r(0)}\abs{\vec {\bf H}}^2\rd\mu+\frac{1}{2\pi r^2}\int_{B_r(0)}\vec {\bf H}\cdot x\rd\mu,\\
\hat g_0(r)\coloneqq&-\frac{\min(r^{-2},1)}{2\pi}\left(2\mu(\mbR^3)+\int_{\mbR^3}\vec {\bf H}\cdot x\rd\mu\right).
}
Volkmann's proof is based on using a properly chosen vector field to test the first variation of a free boundary varifold. This particular choice can be dated back to \cite{Brendle12}.
We follow to use this vector field to test \eqref{eq-1stvariation-capillary} and obtain the following result.
\begin{theorem}\label{Thm-in-ball}
Given $\theta\in[\frac{\pi}{2},\pi)$, let $(V,W)$ be as in Definition \ref{Defn-contact-pair-Ball}.
Suppose in addition that $V$ is an integral $2$-varifold with $\mu(\mbS^2)=0$, then it holds that: for $x_0\neq0$, and for every $0<\sigma<\rho<\infty$,
\eq{\label{iden-Simon-capil-x0}
&\frac{1}{\pi}\int_{B_\rho(x_0)\setminus B_\sigma(x_0)}\Abs{\frac14\vec {\bf H}+\frac{(x-x_0)^\perp}{\abs{x-x_0}^2}}^2\rd\mu
+\frac{1}{\pi}\int_{\hat B_\rho(x_0)\setminus\hat B_\sigma(x_0)}\Abs{\frac14\vec {\bf H}+\frac{(x-\xi(x_0))^\perp}{\abs{x-\xi(x_0)}^2}}^2\rd\mu\\
&-\frac{\cos\theta}{\pi}\int_{B_\rho(x_0)\setminus B_\sigma(x_0)}\left(\frac{x-x_0}{\abs{x-x_0}^2}\cdot x\right)^2\rd\eta
-\frac{\cos\theta}{\pi}\int_{\hat B_\rho(x_0)\setminus \hat B_\sigma(x_0)}\left(\frac{x-\xi(x_0)}{\abs{x-\xi(x_0)}^2}\cdot x\right)^2\rd\eta\\
=&\left(g_{x_0,\theta}(\rho)+\hat g_{x_0,\theta}(\rho)\right)-\left(g_{x_0,\theta}(\sigma)+\hat g_{x_0,\theta}(\sigma)\right),
}
where
\eq{
g_{x_0,\theta}(r)
\coloneqq& g_{x_0}(r)-\frac{\cos\theta\eta(B_r(x_0))}{\pi r^2},\\
\hat g_{x_,\theta}(r)
\coloneqq&\hat g_{x_0}(r)-\frac{\cos\theta\abs{x_0}^2\eta(\hat B_r(x_0))}{\pi r^2}\\
&+\frac{\cos\theta\abs{x_0}^2}{\pi r^2}\int_{\hat B_r(x_0)}\abs{x-\xi(x_0)}^2\rd\eta-\frac{\cos\theta\eta(\hat B_r(x_0))}{\pi}.
}
And for $x_0=0$,
\eq{\label{iden-Simon-capil-0}
\frac{1}{\pi}\int_{B_\rho(0)\setminus B_\sigma(0)}\Abs{\frac14\vec {\bf H}+\frac{1}{\pi}\frac{x^\perp}{\abs{x}^2}}^2\rd\mu
=\left(g_{0}(\rho)+\hat g_{0,\theta}(\rho)\right)-\left(g_{0}(\sigma)+\hat g_{0,\theta}(\sigma)\right),
}
where
\eq{
\hat g_{0,\theta}(r)\coloneqq&
-\frac{\min(r^{-2},1)}{2\pi}\sin\theta\gamma(\mbS^2).
}
\end{theorem}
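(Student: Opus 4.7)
The plan is to follow closely Volkmann's approach in \cite{Volkmann16} for the free-boundary case and adapt it to the capillary first variation formula. The identities will be obtained by testing \eqref{eq-1stvariation-capillary} in the case $x_0\neq 0$, and the extended formula \eqref{eq-1stvariation-2} in the case $x_0=0$, with carefully chosen vector fields, in the same spirit as the proof of Theorem \ref{Thm-Simon-halfspace}. The new ingredient compared to \cite{Volkmann16} is the $\eta$-integral over $\mbS^2$, which must be handled carefully and which is precisely what generates the additional $\cos\theta$-weighted terms on both sides of \eqref{iden-Simon-capil-x0}.

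For $x_0\neq 0$, I would take $X=X_1+X_2$, where $X_1(x)=\left((1/\abs{x-x_0}_\sigma)^2-1/\rho^2\right)_+(x-x_0)$ is the Simon-type radial cutoff field anchored at $x_0$, and $X_2$ is the analogous field anchored at $\xi(x_0)$, weighted by $\abs{x_0}^2$ and corrected by lower-order terms so that $X$ is tangent to $\mbS^2$. The feasibility of this construction rests on the identity $\abs{x_0}\abs{x-\xi(x_0)}=\abs{x-x_0}$ for $x\in\mbS^2$ from \eqref{eq-x_0-xi(x_0)}, which ensures that the cutoff factors for $X_1$ and $X_2$ coincide on $\mbS^2$, so that their normal components can be matched and cancelled by a bounded correction supported near $\mbS^2$. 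The explicit form of this correction will be precisely what generates, upon computing ${\rm div}_{\mbS^2}$ against $\eta$, the extra $\abs{x_0}^2\int_{\hat B_r(x_0)}\abs{x-\xi(x_0)}^2\rd\eta$ and $\eta(\hat B_r(x_0))$ contributions appearing in $\hat g_{x_0,\theta}$ beyond Volkmann's $\hat g_{x_0}$.

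Plugging $X$ into \eqref{eq-1stvariation-capillary}, I would compute $\int{\rm div}_\S X\rd\mu$ and $\cos\theta\int_{\mbS^2}{\rm div}_{\mbS^2}X\rd\eta$ by splitting over the partition $\{B_\sigma(x_0),B_\rho(x_0)\setminus B_\sigma(x_0),\mbR^3\setminus B_\rho(x_0)\}$ and the analogous partition based at $\xi(x_0)$, exactly as in the proof of Theorem \ref{Thm-Simon-halfspace}. The $\mu$-contributions assemble into the $\mu$-based pieces and the cross-terms $\int\vec {\bf H}\cdot\frac{x-p}{\abs{x-p}^2}\rd\mu$ on the annuli, which are rewritten via \eqref{eq-Volkmann16-(8)} and Brakke's perpendicularity theorem to produce the squared-gradient integrands on the LHS of \eqref{iden-Simon-capil-x0}. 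The $\eta$-contributions yield, on the annular pieces, the squared radial $\eta$-integrands on the LHS, and on the other pieces, the $\cos\theta$-weighted $\eta$-corrections in $g_{x_0,\theta}$ and $\hat g_{x_0,\theta}$. Combining with $-\int\vec {\bf H}\cdot X\rd\mu$, which supplies the remaining $\vec {\bf H}$-based terms, and then telescoping yields \eqref{iden-Simon-capil-x0}.

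For $x_0=0$ the much simpler field $X(x)=\left((1/\abs{x}_\sigma)^2-1/\rho^2\right)_+x$ suffices; since this is normal rather than tangent to $\mbS^2$, I would apply the extended first variation \eqref{eq-1stvariation-2} and use \eqref{eq-Volkmann16-(4)} to identify the boundary contributions involving $\gamma(\mbS^2)$, producing the $\sin\theta\gamma(\mbS^2)$ term in $\hat g_{0,\theta}$ as in \eqref{iden-Simon-capil-0}. The principal technical obstacle is the construction of $X_2$ in the previous paragraph: designing the lower-order corrections so that $X$ is \emph{exactly} tangent to $\mbS^2$ while simultaneously producing, upon differentiation and integration against $\eta$, precisely the extra integrals appearing in $\hat g_{x_0,\theta}$. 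The interaction of these corrections with the $\mu$-integral---potentially generating additional $\mu$-terms that must cancel via \eqref{eq-Volkmann16-(8)}---requires the most careful bookkeeping.
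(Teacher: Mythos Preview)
Your proposal is essentially correct and follows the same strategy as the paper: for $x_0\neq0$ you test \eqref{eq-1stvariation-capillary} with the very same Volkmann vector field $X=X_1+X_2$, and the new work is exactly the computation of $-\cos\theta\int_{\mbS^2}{\rm div}_{\mbS^2}X\,\rd\eta$, which produces the additional $\eta$-terms you describe. One clarification: since $X$ here coincides with Volkmann's field, the $\mu$-computation is \emph{already} contained in \eqref{eq-Volkmann16-(5)}; there are no new $\mu$-terms to cancel, so the ``most careful bookkeeping'' you anticipate is not needed---only the $\eta$-divergences of $X_1$ and $X_2$ on $\mbS^2$ are genuinely new.

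For $x_0=0$ your route differs slightly from the paper's. You propose testing the non-tangential field $X_1(x)=l(\abs{x})x$ in the extended identity \eqref{eq-1stvariation-2}; the paper instead adds the constant-multiple-of-$x$ correction $X_2(x)=\big(\min(1,\rho)^2/\rho^2-\min(1,\sigma)^2/\sigma^2\big)x$ so that $X=X_1+X_2$ \emph{vanishes} on $\mbS^2$, and then applies \eqref{eq-1stvariation-capillary} directly (the $\eta$-term drops out), invoking \eqref{eq-Volkmann16-(4)} only at the end to rewrite $\hat g_0$ as $\hat g_{0,\theta}$. Your approach also works: on $\mbS^2$ one has ${\rm div}_{\mbS^2}X_1=2l(1)$ and $X_1\cdot x=l(1)$, so the two $\cos\theta\,\eta$-terms in \eqref{eq-1stvariation-2} cancel and the $\sin\theta\,\gamma(\mbS^2)$ term appears directly with the correct coefficient $l(1)=\min(\sigma^{-2},1)-\min(\rho^{-2},1)$, without needing \eqref{eq-Volkmann16-(4)}. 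The two routes are equivalent up to adding a constant multiple of the identity \eqref{eq-Volkmann16-(4)}.
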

\begin{proof}
For every $0<\sigma<\rho<\infty$, we continue to use the Lipschitz cut-off function $l$ defined in the proof of Theorem \ref{Thm-Simon-halfspace}.
For $x_0\in\mbR^3$, let
\eq{
X_1(x)
\coloneqq l(\abs{x-x_0})(x-x_0).
}

\noindent{\bf Case 1. }$x_0\neq0$.

Define
\eq{
X_2(x)
\coloneqq &
\left((\frac{1}{\abs{x-\xi(x_0)}_{\frac{\sigma}{\abs{x_0}}}})^2-\frac{\abs{x_0}^2}{\rho^2}\right)_+(x-\xi(x_0))\\
&+\left(\frac{\min(\abs{x_0}\abs{x-\xi(x_0)},\rho)^2}{\rho^2}-\frac{\min(\abs{x_0}\abs{x-\xi(x_0)},\sigma)^2}{\sigma^2}\right)x,
}
and
\eq{
X\coloneqq X_1+X_2.
}
As verified in \cite{Volkmann16}*{(7)}, $X$ is an admissible vector field to test \eqref{eq-1stvariation-capillary} for a.e. $0<\sigma<\rho<\infty$.
Moreover, the terms $\int_{\mbR^3}{\rm div}_\S X\rd\mu$ and $-\int_{\mbR^3}\vec H\cdot X\rd\mu$ are explicitly computed. That is, if $\theta=\frac{\pi}{2}$, testing \eqref{eq-1stvariation-capillary} with such $X$, one gets exactly \eqref{eq-Volkmann16-(5)}.
Therefore for the case $\theta\in(\frac{\pi}{2},\pi)$, it suffices to compute
\eq{
-\cos\theta\int_{\mbS^2}{\rm div}_{\mbS^2}X\rd\eta.
}
To this end, we consider the following decomposition of $\mbR^3$:
\eq{
\mcF_1\coloneqq\{B_\sigma(x_0),B_\rho(x_0)\setminus B_\sigma(x_0),\mbR^3\setminus B_\rho(x_0)\},\\
\mcF_2\coloneqq\{\hat B_\sigma(x_0),\hat B_\rho(x_0)\setminus \hat B_\sigma(x_0),\mbR^3\setminus \hat B_\rho(x_0)\},
}
and we shall compute
\eq{
-\cos\theta\int_A{\rm div}_{\mbS^2}X_i\rd\eta
}
for all sets $A\in\mcF_i$, i=1,2, separately.

For $X_1$, a direct computation shows that
\eq{
\na X_1
=
\begin{cases}
\left(\frac{1}{\sigma^2}-\frac{1}{\rho^2}\right){\rm Id},\quad&0\leq\abs{x-x_0}\leq\sigma,\\
\left(\frac{1}{\abs{x-x_0}^2}-\frac{1}{\rho^2}\right){\rm Id}-2\frac{x-x_0}{\abs{x-x_0}^2}\otimes\frac{x-x_0}{\abs{x-x_0}^2},\quad&\sigma<\abs{x-x_0}\leq\rho\\
0,\quad&\rho<\abs{x-x_0},
\end{cases}
}
so that on $S^2$,
\eq{
{\rm div}_{\mbS^2}X_1(x)
=&{\rm div}X_1(x)-\na X_1[x]\cdot x\\
=&
\begin{cases}
\frac{2}{\sigma^2}-\frac{2}{\rho^2},\quad&0\leq\abs{x-x_0}\leq\sigma,\\
-\frac{2}{\rho^2}+2\left(\frac{x-x_0}{\abs{x-x_0}^2}\cdot x\right)^2,\quad&\sigma<\abs{x-x_0}\leq\rho,\\
0\quad&\rho<\abs{x-x_0},
\end{cases}
}
and it is easy to deduce that
\eq{\label{eq-div-S^2-X1}
&-\cos\theta\int_{\mbS^2}{\rm div}_{\mbS^2}X_1\rd\eta\\
=&-\cos\theta\left(\frac{2}{\sigma^2}\eta(B_\sigma(x_0))-\frac{2}{\rho^2}\eta(B_\rho(x_0))+2\int_{B_\rho(x_0)\setminus B_\sigma(x_0)}\left(\frac{x-x_0}{\abs{x-x_0}^2}\cdot x\right)^2\rd\eta\right).
}

For $X_2$, we may compute directly to find:

\noindent As $0\leq\abs{x-\xi(x_0)}\leq\frac{\sigma}{\abs{x_0}}$,
\eq{
X_2
=
\abs{x_0}^2\left(\frac{1}{\sigma^2}-\frac{1}{\rho^2}\right)(x-\xi(x_0))-\abs{x_0}^2\abs{x-\xi(x_0)}^2\left(\frac{1}{\sigma^2}-\frac{1}{\rho^2}\right)x;
}
as $\frac{\sigma}{\abs{x_0}}<\abs{x-\xi(x_0)}\leq\frac{\rho}{\abs{x_0}}$,
\eq{
X_2
=\left(\frac{1}{\abs{x-\xi(x_0)}^2}-\frac{\abs{x_0}^2}{\rho^2}\right)(x-\xi(x_0))-x+\frac{\abs{x_0}^2\abs{x-\xi(x_0)}^2}{\rho^2}x;
}
and $X_2\equiv0$ as $\frac{\rho}{\abs{x_0}}<\abs{x-\xi(x_0)}$.

Thus $\na X_2$ vanishes on $\mbR^3\setminus\hat B_\rho(x_0)$; as $0<\abs{x-\xi(x_0)}\leq\frac{\sigma}{\abs{x_0}}$,
\eq{
\na X_2
=
\left(\frac{1}{\sigma^2}-\frac{1}{\rho^2}\right)\abs{x_0}^2\left\{(1-\abs{x-\xi(x_0)}^2){\rm Id}-2x\otimes(x-\xi(x_0))\right\},
}
and as $\frac{\sigma}{\abs{x_0}}\leq\abs{x-\xi(x_0)}\leq\frac{\rho}{\abs{x_0}}$,
\eq{
\na X_2
=&\left(\frac{1}{\abs{x-\xi(x_0)}^2}-\frac{\abs{x_0}^2}{\rho^2}\right){\rm Id}-2\frac{x-\xi(x_0)}{\abs{x-\xi(x_0)}^2}\otimes\frac{x-\xi(x_0)}{\abs{x-\xi(x_0)}^2}\\
&-\left(1-\frac{\abs{x_0}^2\abs{x-\xi(x_0)}^2}{\rho^2}\right){\rm Id}
+2\frac{\abs{x_0}^2}{\rho^2}x\otimes(x-\xi(x_0).
}
Therefore it is not difficult to compute that: as $0\leq\abs{x-\xi(x_0)}\leq\frac{\sigma}{\abs{x_0}},$
\eq{
{\rm div}_{\mbS^2}X_2(x)
=
2\abs{x_0}^2\left(1-\abs{x-\xi(x_0)}^2\right)(\frac{1}{\sigma^2}-\frac{1}{\rho^2}),
}
and as $\frac{\sigma}{\abs{x_0}}<\abs{x-\xi(x_0)}<\frac{\rho}{\abs{x_0}},$
\eq{
{\rm div}_{\mbS^2}X_2(x)
=-\frac{2\abs{x_0}^2\left(1-\abs{x-\xi(x_0)}^2\right)}{\rho^2}-2+2\left(\frac{x-\xi(x_0)}{\abs{x-\xi(x_0)}^2}\cdot x\right)^2.
}
It follows that
\eq{\label{eq-div-S^2-X2}
&-\cos\theta\int_{\mbS^2}{\rm div}_{\mbS^2}X_2\rd\eta\\
=&-\cos\theta\bigg[\left(\frac{2\abs{x_0}^2}{\sigma^2}\eta(\hat B_\sigma(x_0))-\frac{2\abs{x_0}^2}{\sigma^2}\int_{\hat B_\sigma(x_0)}\abs{x-\xi(x_0)}^2\rd\eta
+2\eta(\hat B_\sigma(x_0))\right)\\
&-\left(\frac{2\abs{x_0}^2}{\rho^2}\eta(\hat B_\rho(x_0))-\frac{2\abs{x_0}^2}{\rho^2}\int_{\hat B_\rho(x_0)}\abs{x-\xi(x_0)}^2\rd\eta
+2\eta(\hat B_\rho(x_0))\right)\\
&+2\int_{\hat B_\rho(x_0)\setminus\hat B_\sigma(x_0)}\left(\frac{x-\xi(x_0)}{\abs{x-\xi(x_0)}^2}\cdot x\right)^2\rd\eta\bigg].
}
Recall that if $\theta=\frac{\pi}{2}$, testing \eqref{eq-1stvariation-capillary} with such $X$, one gets exactly \eqref{eq-Volkmann16-(5)}.
Now for $\theta\in(\frac{\pi}{2},\pi)$, taking \eqref{eq-div-S^2-X1} and \eqref{eq-div-S^2-X2} into consideration, we thus obtain \eqref{iden-Simon-capil-x0} for a.e. $\sigma$ and $\rho$, and an approximation argument shows that this indeed holds for every $\sigma$ and $\rho$.

\

\noindent{\bf Case 2. }$x_0=0$.

We continue to use $X_1$ defined in {\bf Case 1} (with $x_0=0$) and define
\eq{
X_2(x)
&\coloneqq\left(\frac{\min(1,\rho)^2}{\rho^2}-\frac{\min(1,\sigma)^2}{\sigma^2}\right)x,\\
X
&\coloneqq X_1+X_2.
}
A direct computation shows that $X$ is an admissible vector field to test \eqref{eq-1stvariation-capillary}.
Indeed, we have $X=X_1+X_2\equiv0$ on $\mbS^2$ for every $0<\sigma<\rho<\infty$.
Thus
\eq{
-\cos\theta\int_{\mbS^2}{\rm div}_{\mbS^2}X\rd\eta=0.
}
Recall that as $\theta=\frac{\pi}{2}$, testing \eqref{eq-1stvariation-capillary} with such $X$ one gets exactly \eqref{eq-Volkmann16-(5')}, therefore for $\theta\in(\frac{\pi}{2},\pi)$, we shall get the same identity.
Moreover,
thanks to \eqref{eq-Volkmann16-(4)} and invoking the definition of $\hat g_0$, we may rearrange this and deduce \eqref{iden-Simon-capil-0} as desired.
\end{proof}

\begin{proposition}
Given $\theta\in[\pi/2,\pi)$. For every $x_0\in\mbR^3$, the tilde-density
\eq{
&\tilde\Theta^2(\mu-\cos\theta\eta,x_0)\\
\coloneqq&
\begin{cases}
\lim_{r\searrow0}\left(\frac{(\mu-\cos\theta\eta(B_r(x_0))}{\pi r^2}+\frac{(\mu-\cos\theta\eta)(\hat B_r(x_0))}{\pi\abs{x_0}^{-2}r^2}\right),\quad&x_0\neq0,\\
\lim_{r\searrow0}\frac{\mu(B_r(0))}{\pi r^2},\quad&x_0=0,
\end{cases}
}
exists.
Moreover, the function $x\mapsto\tilde\Theta(\mu-\cos\theta\eta,x)$ is upper semi-continuous in $\mbR^3$.
\end{proposition}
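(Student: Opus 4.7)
The strategy is to mimic the proof of Proposition \ref{Prop-uppersemi-continuous} from Section \ref{Sec-2}, now using the Simon-type monotonicity formulas \eqref{iden-Simon-capil-x0} and \eqref{iden-Simon-capil-0} in place of \eqref{identity-Simon-capillary}. First I would split into the two cases $x_0\neq0$ and $x_0=0$ and extract the monotone quantity from each identity. For $x_0\neq 0$, write
\eq{
G_\theta^{x_0}(r)\coloneqq g_{x_0,\theta}(r)+\hat g_{x_0,\theta}(r).
}
Since $\cos\theta\leq0$ for $\theta\in[\frac\pi2,\pi)$, both boundary correction integrals
$-\frac{\cos\theta}{\pi}\int\bigl(\frac{x-x_0}{|x-x_0|^2}\cdot x\bigr)^2\rd\eta$ and its $\xi(x_0)$-counterpart in \eqref{iden-Simon-capil-x0} are nonnegative, so together with the two squared-mean-curvature integrands on the left-hand side they force $r\mapsto G_\theta^{x_0}(r)$ to be nondecreasing. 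For $x_0=0$, the identity \eqref{iden-Simon-capil-0} gives the monotonicity of $g_0(r)+\hat g_{0,\theta}(r)$ directly, with no $\eta$-contribution on the left.

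Next, I would isolate the ``remainder'' consisting of the $\vec{\bf H}\cdot(x-\cdot)$ terms in $g_{x_0,\theta}$ and $\hat g_{x_0,\theta}$, together with the additional $\hat g$-pieces involving $|x-\xi(x_0)|^2$ and $\vec{\bf H}\cdot(|x-\xi(x_0)|^2 x)$. Call the sum $R(r)$. Using H\"older as in \eqref{ineq-KS04-(A.5)}, each such term is controlled by
\eq{
\Bigl(\tfrac{\mu(B_r(x_0))}{\pi r^2}\Bigr)^{1/2}\Bigl(\tfrac{1}{4\pi}\int_{B_r(x_0)}|\vec{\bf H}|^2\rd\mu\Bigr)^{1/2}
}
and its $\hat B$-analogue (the extra factors $|x-\xi(x_0)|\leq r/|x_0|$ on $\hat B_r(x_0)$ are bounded). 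Then Young's inequality with $\frac14<\ep<\frac12$ absorbs $\ep$-times the density on the right, exactly as in \eqref{eq-KS04-(A.6)}, giving a uniform bound
\eq{
\frac{(\mu-\cos\theta\eta)(B_r(x_0))}{\pi r^2}+\frac{(\mu-\cos\theta\eta)(\hat B_r(x_0))}{\pi|x_0|^{-2}r^2}\leq C\bigl(1+\mcW(\mu)\bigr)
}
for $0<r<R$. Combined with the absolute continuity of $|\vec{\bf H}|^2\mu$, this forces $R(r)\to0$ as $r\searrow0$, so $\lim_{r\searrow0}G_\theta^{x_0}(r)$ equals $\tilde\Theta^2(\mu-\cos\theta\eta,x_0)$, which therefore exists. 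The case $x_0=0$ is simpler: only one ball appears on the left of \eqref{iden-Simon-capil-0}, and $\hat g_{0,\theta}(r)$ is a $\mathrm{const}\cdot\min(r^{-2},1)$ term which plays a neutral role in the limit $r\searrow 0$.

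For upper semi-continuity, I would fix $x_0\in\mbR^3$ and a sequence $x_j\to x_0$. For $x_0\neq 0$ the map $x\mapsto \xi(x)$ is smooth in a neighborhood of $x_0$, so $\hat B_\rho(x_j)\to \hat B_\rho(x_0)$ in the sense of indicator functions outside a null set of the boundary sphere. Using outer regularity of $\mu-\cos\theta\eta$ on closed balls and the bound on $R(\rho)$ by a quantity of the form $C\|\vec{\bf H}\|_{L^2(B_{2\rho}(x_0)\cup \hat B_{2\rho}(x_0))}\to0$ as $\rho\searrow0$, we obtain
\eq{
\tilde\Theta^2(\mu-\cos\theta\eta,x_0)\geq \limsup_{j\to\infty}\tilde\Theta^2(\mu-\cos\theta\eta,x_j),
}
exactly as in the final estimate of Proposition \ref{Prop-uppersemi-continuous}. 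For $x_0=0$, upper semi-continuity follows from the analogous argument applied to $g_0$ alone, because $\hat g_{0,\theta}$ is independent of $x_j$ and does not enter the tilde-density at $0$. The main technical obstacle I anticipate is verifying that the auxiliary terms in $\hat g_{x_0,\theta}(r)$ involving $\int_{\hat B_r(x_0)}|x-\xi(x_0)|^2\rd\mu$ and $\int_{\hat B_r(x_0)}\vec{\bf H}\cdot(|x-\xi(x_0)|^2x)\rd\mu$ depend continuously on $x_0\neq 0$ and vanish in the limit $r\searrow 0$ with the right uniformity; this is handled by combining the factor $|x-\xi(x_0)|\leq r/|x_0|$ with H\"older and the $L^2$-integrability of $\vec{\bf H}$ exactly as for $R(r)$, so the argument goes through with essentially the same constants.
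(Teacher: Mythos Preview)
Your proposal is correct and follows essentially the same route as the paper: extract monotonicity of $G_\theta^{x_0}$ from \eqref{iden-Simon-capil-x0}--\eqref{iden-Simon-capil-0}, isolate a remainder $R_{x_0,\theta}(r)$, bound it by H\"older--Young to get a uniform density bound, conclude $R\to0$ and hence existence of the tilde-density, then run the same $\limsup$ argument for upper semi-continuity. Two small adjustments you will encounter when writing it out: the remainder in the ball case carries additional terms not divided by $r^2$ (namely $\tfrac{\mu(\hat B_r(x_0))}{\pi}$, $\tfrac{1}{2\pi}\int_{\hat B_r(x_0)}\vec{\bf H}\cdot x\,\rd\mu$, and $-\tfrac{\cos\theta\,\eta(\hat B_r(x_0))}{\pi}$), which are harmless for $r\to0$ but add a $(\mu-\cos\theta\eta)(\mbR^3)$ term to your uniform bound; and the Young step produces a $2\ep$ coefficient rather than $\ep$, which is why the paper takes $\ep\in(\tfrac18,\tfrac14)$---your range still works, just with larger constants.
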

\begin{proof}
We prove for the first assertion:

{\bf Case 1. }$x_0\neq0$.

Define
\eq{
R_{x_0}(r)
\coloneqq&\frac{1}{2\pi r^2}\int_{B_r(x_0)}\vec {\bf H}\cdot(x-x_0)\rd\mu+\frac{\abs{x_0}^2}{2\pi r^2}\int_{\hat B_r(x_0)}\vec {\bf H}\cdot(x-\xi(x_0))\rd\mu\\
&-\frac{\abs{x_0}^2}{\pi r^2}\int_{\hat B_r(x_0)}\left(\abs{x-\xi(x_0)}^2+(x-\xi(x_0))^T\cdot x\right)\rd\mu\\
&-\frac{\abs{x_0}^2}{2\pi r^2}\int_{\hat B_r(x_0)}\vec {\bf H}\cdot(\abs{x-\xi(x_0)}^2x)\rd\mu,\\
R_{x_0,\theta}(r)
\coloneqq& R_{x_0}(r)+\frac{\cos\theta\abs{x_0}^2}{\pi r^2}\int_{\hat B_r(x_0)}\abs{x-\xi(x_0)}^2\rd\eta-\frac{\cos\theta\eta(\hat B_r(x_0))}{\pi}\\
&+\frac{1}{2\pi}\int_{\hat B_r(x_0)}\vec {\bf H}\cdot x\rd\mu+\frac{\mu(\hat B_r(x_0))}{\pi},
}
then
\eq{
G_{x_0,\theta}(r)
\coloneqq&\frac{\left(\mu-\cos\theta\eta\right)(B_r(x_0))}{\pi r^2}+\frac{\abs{x_0}^2\left(\mu-\cos\theta\eta\right)(\hat B_r(x_0))}{\pi r^2}\\
&+\frac{1}{16\pi}\int_{B_r(x_0)}\abs{\vec {\bf H}}^2\rd\mu+\frac{1}{16\pi}\int_{\hat B_r(x_0)}\abs{\vec {\bf H}}^2\rd\mu+R_{x_0,\theta}(r)
}
is monotonically non-decreasing
thanks to \eqref{iden-Simon-capil-x0} and the fact that $\theta\in[\frac{\pi}{2},\pi)$.
Therefore
\eq{
\lim_{r\ra0^+}G_{x_0,\theta}(r)
}
exists.

Now we estimate with H\"older inequality:
\eq{\label{ineq-KS04-(A.5)-ball}
\abs{R_{x_0,\theta}(r)}
\leq&\left(\frac{\mu(B_r(x_0))}{\pi r^2}\right)^\frac12
\left(\frac{1}{4\pi}\int_{B_r(x_0)}\abs{\vec {\bf H}}^2\rd\mu\right)^\frac12\\
&+\left(\frac{\mu(\hat B_r(x_0))}{\pi \abs{x_0}^{-2}r^2}\right)^\frac12
\left(\frac{1}{4\pi}\int_{\hat B_r(x_0)}\abs{\vec {\bf H}}^2\rd\mu\right)^\frac12+\frac{\mu(\hat B_r(x_0))}{\pi}\\
&+\left(\frac{\mu(\hat B_r(x_0))}{\pi \abs{x_0}^{-2}r^2}\right)^\frac12\left(\frac{\mu(\hat B_r(x_0))}{\pi }\right)^\frac12+\left(\frac{\mu(\hat B_r(x_0))}{\pi }\right)^\frac12\left(\frac{1}{4\pi}\int_{\hat B_r(x_0)}\abs{\vec {\bf H}}^2\rd\mu\right)^\frac12\\
&-\frac{2\cos\theta\eta(\hat B_r(x_0))}{\pi}+\frac{1}{2\pi}\int_{\hat B_r(x_0)}\abs{\vec {\bf H}}\rd\mu+\frac{\mu(\hat B_r(x_0))}{\pi},
}
where we have used the fact that ${\rm spt}\mu\subset\overline{\mbB^3}$ so that $\abs{x}\leq1$.
Moreover, for $\frac18<\ep<\frac14$, Young's inequality gives
\eq{
\abs{R_{x_0,\theta}(r)}
\leq&\ep\frac{\mu(B_r(x_0))}{\pi r^2}+\frac{1}{16\pi\ep}\int_{B_r(x_0)}\abs{\vec {\bf H}}^2\rd\mu\\
&+\ep\frac{\mu(\hat B_r(x_0))}{\pi\abs{x_0}^{-2} r^2}+\frac{1}{16\pi\ep}\int_{\hat B_r(x_0)}\abs{\vec {\bf H}}^2\rd\mu\\
&+\ep\frac{\mu(\hat B_r(x_0))}{\pi\abs{x_0}^{-2}r^2}+\frac{1}{4\ep}\frac{\mu(\hat B_r(x_0))}{\pi}+\frac{\mu(\hat B_r(x_0))}{4\pi}+\frac{1}{4\pi}\int_{\hat B_r(x_0)}\abs{\vec {\bf H}}^2\rd\mu\\
&+2\frac{(\mu-\cos\theta\eta)(\hat B_r(x_0))}{\pi}+\frac{1}{2\pi}\int_{\hat B_r(x_0)}\abs{\vec {\bf H}}\rd\mu\\
\leq&2\ep\left(\frac{\mu(B_r(x_0))}{\pi r^2}+\frac{\mu(\hat B_r(x_0))}{\pi \abs{x_0}^{-2}r^2}\right)+\frac{\ep^{-1}}{16\pi}\left(\int_{B_r(x_0)\cup\hat B_r(x_0)}\abs{\vec {\bf H}}^2\rd\mu\right)\\
&+\frac{11+\ep^{-1}}{4\pi}(\mu-\cos\theta\eta)(\hat B_r(x_0))+\frac{3}{8\pi}\int_{\hat B_r(x_0)}\abs{\vec {\bf H}}^2\rd\mu,
}
where we have used again $\theta\in[\frac{\pi}{2},\pi)$ and the fact that
\eq{
\frac{1}{2\pi}\int_{\hat B_r(x_0)}\abs{\vec {\bf H}}\rd\mu
\leq\frac{\mu(\hat B_r(x_0))}{2\pi}+\frac{1}{8\pi}\int_{\hat B_r(x_0)}\abs{\vec {\bf H}}^2\rd\mu.
}

By virtue of the monotonicity of $G_{x_0,\theta}(r)$, we obtain for $0<\sigma<\rho<\infty$
\eq{&
\frac{\left(\mu-\cos\theta\eta\right)(B_\sigma(x_0))}{\pi\sigma^2}+\frac{\left(\mu-\cos\theta\eta\right)(\hat B_\sigma(x_0))}{\pi\abs{x_0}^{-2}\sigma^2}\\
\leq &\frac{\left(\mu-\cos\theta\eta\right)(B_\sigma(x_0))}{\pi\sigma^2}+\frac{\left(\mu-\cos\theta\eta\right)(\hat B_\sigma(x_0))}{\pi\abs{x_0}^{-2}\sigma^2}\\
&+\frac{1}{16\pi}\int_{B_\rho(x_0)}\abs{\vec {\bf H}}^2\rd\mu+\frac{1}{16\pi}\int_{\hat B_\rho(x_0)}\abs{\vec {\bf H}}^2\rd\mu+\abs{R_{x_0,\theta}(\rho)}+\abs{R_{x_0,\theta}(\sigma)}\\
\leq&(1+2\ep)\left(\frac{\left(\mu-\cos\theta\eta\right)(B_\rho(x_0))}{\pi\rho^2}+\frac{\left(\mu-\cos\theta\eta\right)(\hat B_\rho(x_0))}{\pi\abs{x_0}^{-2}\rho^2}\right)\\
&+\frac{1+2\ep^{-1}}{16\pi}\int_{B_\rho(x_0)\cup\hat B_r(x_0)}\abs{\vec {\bf H}}^2\rd\mu
+\frac{11+\ep^{-1}}{2\pi}(\mu-\cos\theta\eta)(\mbR^3)\\
&+\frac{3}{4\pi}\int_{\mbR^3}\abs{\vec H}^2\rd\mu
+2\ep\left(\frac{\left(\mu-\cos\theta\eta\right)(B_\sigma(x_0))}{\pi\sigma^2}+\frac{\left(\mu-\cos\theta\eta\right)(\hat B_\sigma(x_0))}{\pi\abs{x_0}^{-2}\sigma^2}\right).
}
In particular, since $\frac18\leq\ep\leq\frac14$, this yields for a fixed $R>0$ that
\eq{\label{eq-KS04-(A.6)-ball}
&\frac{\left(\mu-\cos\theta\eta\right)(B_r(x_0))}{\pi r^2}+\frac{\left(\mu-\cos\theta\eta\right)(\hat B_r(x_0))}{\pi\abs{x_0}^{-2}r^2}\\
\leq&3\left(\frac{\left(\mu-\cos\theta\eta\right)(B_R(x_0))}{\pi R^2}+\frac{\left(\mu-\cos\theta\eta\right)(\hat B_R(x_0))}{\pi\abs{x_0}^{-2}R^2}\right)\\
&+\frac{23}{4\pi}\int_{\mbR^3}\abs{\vec {\bf H}}^2\rd\mu+\frac{19}{\pi}(\mu-\cos\theta\eta)(\mbR^3)<\infty
}
for every $0<r<R$.

This, in conjunction with \eqref{ineq-KS04-(A.5)-ball}, yields
\eq{
\lim_{r\ra0^+}\abs{R_{x_0,\theta}(r)}=0,
}
and also implies that the tilde-density $\tilde\Theta(\mu-\cos\theta\eta,x_0)$ exists. Consequently
\eq{
\tilde\Theta(\mu-\cos\theta\eta,x_0)
\leq\left(\frac{\left(\mu-\cos\theta\eta\right)(B_r(x_0))}{\pi r^2}+\frac{\left(\mu-\cos\theta\eta\right)(\hat B_r(x_0))}{\pi\abs{x_0}^{-2}r^2}\right)\\
+\frac{1}{16\pi}\int_{B_r(x_0)}\abs{\vec {\bf H}}^2\rd\mu+\frac{1}{16\pi}\int_{\hat B_r(x_0)}\abs{\vec {\bf H}}^2\rd\mu+R_{\theta,x_0}(\rho).
}

{\bf Case 2. }$x_0=0$.

Define
\eq{
R_{0,\theta}(r)
\coloneqq\frac{1}{2\pi r^2}\int_{B_r(0)}\vec {\bf H}\cdot x\rd\mu,
}
then
\eq{
G_{0,\theta}(r)
\coloneqq\frac{\mu(B_r(0))}{\pi r^2}+\frac{1}{16\pi}\int_{B_r(0)}\abs{\vec {\bf H}}^2\rd\mu
-\frac{\min(r^{-2},1)}{2\pi}\sin\theta\gamma(\mbS^2)
+R_{0,\theta}(r)
}
is monotonically non-decreasing thanks to \eqref{iden-Simon-capil-0}, thus
\eq{
\lim_{r\ra0^+}G_{0,\theta}(r)
}
exists.

Using H\"older inequality and Young's inequality, we obtain for $\frac14\leq\ep\leq\frac12$,
\eq{\label{ineq-KS04-(A.5)-ball-0}
\abs{R_{0,\theta}(r)}
\leq&(\frac{\mu(B_r(0))}{\pi r^2})^\frac12(\frac{1}{4\pi}\int_{B_r(0)}\abs{\vec {\bf H}}^2\rd\mu)^\frac12\\
\leq&\ep\frac{\mu(B_r(0))}{\pi r^2}+\frac{1}{16\pi\ep}\int_{B_r(0)}\abs{\vec {\bf H}}^2\rd\mu.
}

By virtue of the monotonicity of $G_{0,\theta}(r)$, we obtain for $0<\sigma<\rho\leq1$
\eq{
\frac{\mu(B_\sigma(0))}{\pi\sigma^2}
\leq&\frac{\mu(B_\rho(0))}{\pi\rho^2}+\frac{1}{16\pi}\int_{B_\rho(0)}\abs{\vec {\bf H}}^2\rd\mu+\abs{R_{0,\theta}(\rho)}+\abs{R_{0,\theta}(\sigma)}\\
\leq&(1+\ep)\frac{\mu(B_\rho(0))}{\pi\rho^2}+\frac{1+2\ep^{-1}}{16\pi}\int_{B_\rho(0)}\abs{\vec {\bf H}}^2\rd\mu+\ep\frac{\mu(B_\sigma(0))}{\pi\sigma^2},
}
where we have used that $\hat g_{0,\theta}(r)\equiv-\frac{\sin\theta}{2\pi}\gamma(\mbS^2)$ for every $0<r\leq1$ in the first inequality.
In particular, since $\frac14\leq\ep\leq\frac12$ this yields for any $0<r<1$:
\eq{\label{eq-KS04-(A.6)-ball-0}
\frac{\mu(B_r(0))}{\pi r^2}
\leq3\frac{\mu(B_1(0))}{\pi}+\frac{9}{8\pi}\int_{\mbR^3}\abs{\vec {\bf H}}^2\rd\mu<\infty,
}
which, together with \eqref{ineq-KS04-(A.5)-ball-0}, yields
\eq{
\lim_{r\ra0^+}\abs{R_{0,\theta}(r)}=0,
}
and also implies that the tilde-density $\tilde\Theta(\mu-\cos\theta\eta,0)$ exists.
Consequently for $0<r<1$,
\eq{
\tilde\Theta(\mu,0)
\leq\frac{\mu(B_r(0))}{\pi r^2}+\frac{1}{16\pi}\int_{B_r(0)}\abs{\vec {\bf H}}^2\rd\mu+R_{0,\theta}(r).
}

Now we verify the upper semi-continuity by definition,
for a sequence of points $x_j\ra x_0$ and for $0<\rho<\frac12$, we have
\eq{
&\frac{(\mu-\cos\theta)(\overline{B_\rho(x_0)})}{\pi\rho^2}+\frac{(\mu-\cos\theta)(\overline{\hat B_\rho(x_0)})}{\pi\abs{x_0}^{-2}\rho^2}\\
\geq&\limsup_{j\ra\infty}\left(\frac{(\mu-\cos\theta)(\overline{B_\rho(x_j)})}{\pi\rho^2}+\frac{(\mu-\cos\theta)(\overline{\hat B_\rho(x_j)})}{\pi\abs{x_j}^{-2}\rho^2}\right)\\
\geq&\limsup_{j\ra\infty}\left(\tilde\Theta(\mu-\cos\theta\eta,x_j)-\frac{1}{16\pi}\left(\int_{B_\rho(x_j)}\abs{\vec {\bf H}}^2\rd\mu+\int_{\hat B_\rho(x_j)}\abs{\vec {\bf H}}^2\rd\mu\right)-R_{x_j,\theta}(\rho)\right)\\
\geq&\limsup_{j\ra\infty}\tilde\Theta(\mu-\cos\theta\eta,x_j)-C\bigg(\frac{(\mu-\cos\theta\eta)(B_\frac12(x_0)\cup\hat B_\frac12(x_0))}{\pi (\frac12)^2}\\
&+\int_{\mbR^3}\abs{\vec {\bf H}}^2\rd\mu
+(\mu-\cos\theta\eta)(\mbR^3)\bigg)
\cdot\bigg(\norm{\vec {\bf H}}_{L^2(B_{2\rho}(x_0))}+(\frac{\mu(\hat B_\rho(x_0))}{\pi})^\frac12\bigg)\\
&-2\frac{(\mu-\cos\theta\eta)(\hat B_\rho(x_0))}{\pi},
}
where we have used \eqref{eq-KS04-(A.6)-ball}, \eqref{eq-KS04-(A.6)-ball-0} for the last inequality, here we interpret $\hat B_r(0)=\emptyset$ and $\frac{(\mu-\cos\theta)(\hat B_\rho(0))}{\pi\abs{0}^{-2}\rho^2}=0$.
Letting $\rho\ra0^+$, this gives
\eq{
\tilde\Theta(\mu-\cos\theta\eta,x_0)\geq\limsup_{j\ra\infty}\tilde\Theta(\mu-\cos\theta\eta,x_j),
}
which completes the proof of the second assertion.
\end{proof}
\begin{remark}
\normalfont
Since ${\rm spt}\mu$ and ${\rm spt}\eta$ are compact, we see from the definition that
\eq{
&\lim_{r\ra\infty}R_{x_0,\theta}(r)=\frac{1}{2\pi}\int_{\mbR^3}\vec {\bf H}\cdot x\rd\mu+\frac{(\mu-\cos\theta\eta)(\mbR^3)}{\pi},\quad x_0\neq0,\\
&\lim_{r\ra\infty}R_{0,\theta}(r)=0,
}
and obtain from the Simon-type monotonicity formulas (letting $\sigma\ra0^+,\rho\ra\infty$)
\eq{\label{iden-Simon-ball-1} &
\frac{1}{\pi}\int_{\mbR^3}\abs{\frac14\vec {\bf H}+\frac{(x-x_0)^\perp}{\abs{x-x_0}^2}}^2+\abs{\frac14\vec {\bf H}+\frac{(x-\xi(x_0))^\perp}{\abs{x-\xi(x_0)}^2}}^2\rd\mu\\
&-\frac{\cos\theta}{\pi}\int_{\mbR^3}(\frac{x-x_0}{\abs{x-x_0}^2}\cdot x)^2+(\frac{x-\xi(x_0)}{\abs{x-\xi(x_0)}^2}\cdot x)^2\rd\eta\\
=&\lim_{\rho\ra\infty}G_{x_0,\theta}(\rho)-\lim_{\sigma\ra0^+}G_{x_0,\theta}(\sigma)\\
=&\frac{1}{8\pi}\int_{\mbR^3}\abs{\vec {\bf H}}^2\rd\mu+\frac{1}{2\pi}\int_{\mbR^3}\vec H\cdot x\rd\mu+\frac{(\mu-\cos\theta\eta)(\mbR^3)}{\pi}-\tilde\Theta(\mu-\cos\theta\eta,x_0)\\
=&\frac{1}{8\pi}\int_{\mbR^3}\abs{\vec {\bf H}}^2\rd\mu+\frac{\sin\theta\gamma(\mbS^2)-2\cos\theta\eta(\mbR^3)}{2\pi}-\tilde\Theta(\mu-\cos\theta\eta,x_0)
}
for $x_0\neq0$, and for $x_0=0:$
\eq{\label{iden-Simon-ball-1'}
\frac{1}{\pi}\int_{\mbR^3}\abs{\frac14\vec {\bf H}+\frac{x^\perp}{\abs{x}^2}}^2\rd\mu
=\frac{1}{16\pi}\int_{\mbR^3}\abs{\vec {\bf H}}^2\rd\mu+\frac{\sin\theta\gamma(\mbS^2)}{2\pi}-\tilde\Theta(\mu,0).
}
\end{remark}

\subsection{Applications of Eqn. \ref{iden-Simon-ball-1}}

As a by-product of the monotonicity formula, we may establish the lower bound for the Willmore functional, which is conformal invariant and was proved in the previous Section. Moreover, we may obtain the optimal area estimate for minimal capillary surfaces in the unit ball in a rather direct way, which needs not go through the
Willmore functional.

\

\begin{proof}[Another proof of Theorem \ref{coro3.5}]
In the case that $x_0\in\mbS^2$, we clearly have $\xi(x_0)=x_0$.
Besides, for any $x\in\mbS^2$, there holds $\abs{x-x_0}^4=4(1-(x\cdot x_0))^2$, and hence a direct computation shows that
\eq{
(\frac{x-x_0}{\abs{x-x_0}^2}\cdot x)^2+(\frac{x-\xi(x_0)}{\abs{x-\xi(x_0)}^2}\cdot x)^2
=\frac12\text{ on }\mbS^2.
}
Taking this into account, the Simon-type monotonicity formula \eqref{iden-Simon-ball-1} then reads
\eq{\label{iden-Simon-ball-2}
\frac{1}{\pi}\int_{\mbR^3}\abs{\frac14\vec {\bf H}+\frac{(x-x_0)^\perp}{\abs{x-x_0}^2}}^2+\abs{\frac14\vec {\bf H}+\frac{(x-\xi(x_0))^\perp}{\abs{x-\xi(x_0)}^2}}^2\rd\mu\\
=\frac{1}{8\pi}\int_{\mbR^3}\abs{\vec {\bf H}}^2\rd\mu+\frac{\sin\theta\gamma(\mbS^2)-\cos\theta\eta(\mbR^3)}{2\pi}-\tilde\Theta(\mu-\cos\theta\eta,x_0).
}
Now since $F$: $\Sigma \to  \mbR^3$ be a capillary minimal immersion, and also recall \eqref{eq-Volkmann16-(4)}, we obtain 
\eq{\label{iden-Simon-ball-3}
\frac{1}{\pi}\int_{\mbR^3}\abs{\frac{(x-x_0)^\perp}{\abs{x-x_0}^2}}^2+\abs{\frac{(x-\xi(x_0))^\perp}{\abs{x-\xi(x_0)}^2}}^2\rd\mu
=\frac{2\abs{\S}-\cos\theta\abs{T}}{2\pi}-\tilde\Theta(\mu-\cos\theta\eta,x_0).
}
As \eqref{eq3.00} we have $\tilde\Theta(\mu-\cos\theta\eta,x_0)=(1-\cos \theta) N(x_0)$, where $N(x_0) = \mcH^0(F^{-1}(x_0))$.
Theorem \ref{coro3.5} then follows.    
\end{proof}


\bibliographystyle{alpha}
\bibliography{BibTemplate.bib}

\end{document}